\pgfplotsset{width=8cm,compat=1.15}
\newcommand{\citethm}[2][]{%
\begingroup
  \let\NAT@mbox=\mbox
  \let\@cite\NAT@citenum
  \let\NAT@space\NAT@spacechar
  \let\NAT@super@kern\relax
  \renewcommand\NAT@open{[}%
  \renewcommand\NAT@close{]}%
  \cite[#1]{#2}%
  \endgroup
}
\newcommand{\RR}{\mathbb{R}}
\newcommand{\EE}{\mathbb{E}}
\newcommand{\de}{\mathrm{d}}
\newcommand{\abs}[1]{\left\lvert #1 \right\rvert}
\newcommand{\norm}[1]{\left\lVert#1\right\rVert}
\newcommand{\argmax}{\text{argmax}}
\newcommand{\mc}[1]{\ensuremath{\mathcal{{#1}}}}
\newcommand{\ie}{\textit{i.e. }}
\newcommand{\eg}{\textit{e.g. }}
\DeclareSymbolFont{calletters}{OMS}{cmsy}{m}{n}
\DeclareSymbolFontAlphabet{\mathcal}{calletters}
\def\be{\begin{eqnarray}}
\def\ee{\end{eqnarray}}
\def\b*{\begin{eqnarray*}}
\def\e*{\end{eqnarray*}}
\newtheorem{Theorem}{Theorem}[section]
\newtheorem{Proposition}[Theorem]{Proposition}
\newtheorem{Assumption}{Assumption}
\newtheorem{Remark}[Theorem]{Remark}
\newtheorem{Example}[Theorem]{Example}
\makeatletter \@addtoreset{equation}{section}
\newcommand{\bea}{\begin{eqnarray}}
\newcommand{\bes}{\begin{subequations}}
\newcommand{\ees}{\end{subequations}}
\newcommand{\bgt}{\begin{gather}}
\newcommand{\egt}{\begin{gather}}
\newcommand{\eea}{\end{eqnarray}}
\newcommand{\beaa}{\begin{eqnarray*}}
\newcommand{\eeaa}{\end{eqnarray*}}
\def \D{\mathbb{D}}
\def \E{\mathbb{E}}
\def \F{\mathbb{F}}
\def \P{\mathbb{P}}
\def \R{\mathbb{R}}
\def\Ac{{\cal A}}
\def\Fc{{\cal F}}
\def \eps{\varepsilon}
\def \0{\mathbf{0}}
\def \vp{\varphi}
\def \x{\times}
\def\1{{\rm 1}}
  \def\vs#1{\vspace{2mm}}
\def\D{{\mathbb D}}
\def\R{{\mathbb R}}
\def\P{{\mathbb P}}
\def\Ab{{\mathbb A}}
\def\Ac{{\mathcal A}}
\def\ar{{\rm a}}
\title{Diffusive limit approximation of pure-jump  optimal stochastic control problems}
\author{Marc Abeille\footnote{Criteo AI Lab. m.abeille@criteo.com}, Bruno Bouchard\footnote{CEREMADE, Universit\'e Paris-Dauphine, PSL, CNRS.  bouchard@ceremade.dauphine.fr. }, Lorenzo Croissant\footnote{CEREMADE, Universit\'e Paris-Dauphine, PSL, CNRS, and Criteo AI Lab.  croissant@ceremade.dauphine.fr.} }
\newcommand{\lc}[1]{#1}
\def\bru#1{{#1}}
\def\bblue#1{{#1}}
\begin{document}

\maketitle

\begin{abstract} We consider the diffusive limit of a typical pure-jump Markovian control problem as the intensity of the driving Poisson process tends to infinity. We show that the convergence speed is provided by the H\"older exponent of the Hessian of the limit problem, and explain how correction terms can be constructed. This provides an alternative efficient method for the numerical approximation of the optimal control of a pure-jump problem in situations with very high intensity of jumps. We illustrate this approach in the context of a display advertising auction problem.  
\end{abstract}

 {\bf Keywords: } Diffusive limit, stochastic optimal control, online auctions. 
  
\section{Introduction}

Let $N$ be a random point process with predictable compensator $\lambda \nu(\de e)\de t$, for some \bru{probability} measure $\nu$ on $\R$, $\lambda>0$, and let $X^{t,x,\alpha}$ be the solution of 
\[
X^{t,x,\alpha}=x+  \int_t^\cdot \int b(X^{t,x,\alpha}_{s-},\alpha_{s},e)N(\de e,\de s)\,, 
\]
 in which $\alpha$ belongs to  the set $\Ac$ of predictable controls with values in some given set $\Ab$. Then, under mild assumptions, the value of the control problem 
 \[
V(t,x):=\sup_{\alpha \in \Ac}  \E\left[ \int_{t}^{ T}   r(X^{t,x,\alpha}_{s-},\alpha_{s})\de N_{s}\right]\,,
\]
with $N_t:=N(\RR,[0,t])$, $t\ge 0$, solves the integro-differential equation 
\begin{align}\label{eq: pde integro intro} 
\partial_{t}V + \lambda \sup_{a\in \Ab}\left( \int V(\cdot,\cdot+b(\cdot,a,e))\nu(\de e)-V+r(\cdot,a)\right)=0 \mbox{ on }[0,T)\times\RR
\end{align}
with boundary condition $V(T,\cdot)=0$, possibly in the sense of viscosity solutions. From this characterization, standard numerical schemes follow that allow one to approximate both the value function $V$ and the associated optimal control. 

However, \eqref{eq: pde integro intro}  is non-local and obtaining a precise approximation of the solution is highly time consuming as soon as the intensity $\lambda$ of $N$ is large.   
This is the case, for instance, for ad-auctions on the web, {see e.g.~\cite{fernandez-tapia2017Optimal}}, that are posted almost in continuous time, and on which one would typically like to apply reinforcement learning technics based on the resolution of \eqref{eq: pde integro intro} for the current estimation of the parameters, leading to a possibly large number of resolutions  for different sets of parameters. 
On the other hand, when $\lambda$ is very large, it is tempting to approximate the original jump diffusion control problem by its asymptotic as $\lambda\to \infty$. In this paper, we consider the diffusive limit approximation. Namely, if one takes $\lambda$ of the form $\lambda=1/\epsilon$, with $\epsilon$ small, and $b=\epsilon b_{1}+\sqrt{\epsilon} b_{2}$ with 
$\int b_{2}(\cdot,e)\nu(\de e)=0$, then a second order Taylor expansion on \eqref{eq: pde integro intro}  implies that $\epsilon V$ converges as $\epsilon \to 0$ to the solution $\bar V$ of 
\begin{align}\label{eq: HJB diffusion intro} 
\partial_{t} \bar V +\sup_{\bar a\in \Ab}\left(\int b_{1}(\cdot,\bar a,e)\nu(\de e) \partial_{x} \bar V  +\frac12 \int \abs{b_{2}}^{2}(\cdot,\bar a,e)\nu(\de e)  \partial^{2}_{xx}\bar V +r(\cdot,\bar a)\right)=0, 
\; \bar V(T,\cdot)=0.
\end{align}
The advantage of the above is that it is now a local equation which can be solved in a much more efficient way. Note that another possibility is to consider a first order expansion as in \cite{fernandez-tapia2017Optimal}, which corresponds to considering a fluid limit, but this is less precise.

For such a specification of the coefficients ($\lambda$, $b$), the existence of a diffusive limit is expected, see e.g.~\cite{jacod2013limit} for general results on the convergence of stochastic processes. For control problems, the convergence of the value function can be proved by using the stability of viscosity solutions as in  \cite[Section 3]{fleming1989existence}, which considers the limit of discrete time zero-sum games, or by applying weak-convergence results. In particular an important literature on this subject exists within the insurance and queueing network literatures, see \eg~\cite{bauerle2004approximation,cohen2020rate,chen2001fundamentals}. However, it seems that there is no general result on the speed of convergence in the case of a (generic) optimal control problem as defined in Section \ref{sec : pure jump problem} below. 

In Section \ref{sec: diffusive limit}, we  verify that the above intuition is correct. Unlike \cite{fleming1989existence}, we do not simply rely on the stability of viscosity solutions. Neither do we rely on the weak convergence of the underlying process. The reason is that weak convergence does not give access to the convergence speed in optimal control problems. Instead, we directly study the regularity of the solution to \eqref{eq: HJB diffusion intro}. Thanks to its vanishing terminal condition (otherwise it should be assumed smooth enough), we show that $\partial^{2}_{xx}\bar V$ is uniformly $\beta$-H\"older in space, for some $\beta\in (0,1]$, whenever the coefficients of \eqref{eq: HJB diffusion intro} are uniformly Lipschitz in space and under a uniform ellipticity condition. By a second order Taylor expansion, this allows us to pass from \eqref{eq: HJB diffusion intro}  to \eqref{eq: pde integro intro} up to an error term of order $\epsilon^{\frac{\beta}2}$, and therefore provides the required convergence rate. In general this rate \lc{cannot} be improved. As a by-product, we obtain an easy way to construct an $\epsilon^{\frac{\beta}{2}}$-optimal control for the original pure-jump control problem. We then study the limit $\epsilon^{-\frac{\beta}{2}}(V-\bar V)$ as $\epsilon\to 0$. Under mild assumptions, we show that it solves a (possibly non-linear) PDE. This provides a first error correction term. To achieve higher orders of convergence, this approach can be generalised to a system of non-linear PDEs, upon its existence. 

As an example of application, we consider in Section \ref{sec: exemple auction} a simplified repeated online auction bidding problem, where a buyer seeks to maximise his profit when facing both competition and a seller who adapts his price to incoming bids. Our numerical experiments show that our approximation permits a considerable gain in computation time.

For ease of exposition, we shall restrict to situations where the controlled process is of dimension one. This fact will be used explicitly only to derive our regularity results in Section \ref{subsec: regul bar V}. Similar results can be obtained in higher dimension, by using  standard regularity results for parabolic partial differential equations, see e.g.~\cite{lieberman1996second,ladyzhenska1988linear}.

\section{The pure-jump optimal control problem}\label{sec : pure jump problem}

In this section, we  begin by providing the definition of our pure-jump control problem, and state the well-known link with its associated Hamilton-Jacobi-Bellman equation. The properties stated below are elementary but will be useful for the derivation of our main approximation result of Section \ref{sec: diffusive limit}.

\subsection{Definition}\label{subsec: prelims jump problem}

  Let $\Omega=\D$ denote the space of one dimensional c\`adl\`ag functions on $\R_{+}$ and ${\cal M}(\R\times \R_+)$ denote the collection of positive finite measures on $\R\times \R_+$.  Consider a  measure-valued map  $N: \D\mapsto {\cal M}(\R\times \R_{+})$   and  a probability measure  $\P$ on  $\D$ such that   $N$ is a continuous real-valued $\R$-marked point process with compensator $\lambda\nu(\de e)\de t$, in which   $\lambda>0$ and $\nu$ is a probability measure on $\R$. See e.g.~\cite{bremaud1981point}. For ease of notations, we set $N_{t}:=N(\R,[0,t])$ for $t\ge 0$.

  Let $\F^{t}=(\Fc^{t}_{s})_{s\ge t}$ be the $\P$-augmentation of the filtration generated by $(\bru{\int_{t}^{s} e N(de,dr)})_{s\ge t}$. Given a compact subset $\Ab$ of $\R$, we let $\Ac^{t}$ be the collection of $\F^{t}$-predictable processes with values in $\Ab$. For ease of notations, we also define $\Ac:=\cup_{t\ge 0}\Ac^{t}$.
Throughout this paper, unless otherwise stated we will work on the filtered probability space $(\Omega,\Fc, \F, \P)$, where $\Fc=\Fc^0_{T}$ for $T>0$ given and $\F=\F^0$. 

 We now consider a bounded measurable map $(x,a,e) \in \R\x \Ab\times \R \mapsto b(x,a,e)$. Given $(t,x)\in \R_+\x \R$ and $\alpha \in \Ac$, we  define the c\`adl\`ag process $X^{t,x,\alpha}$ as the solution of 
\begin{align}\label{def : X}
X^{t,x,\alpha}=x+  \int_t^\cdot \int b(X^{t,x,\alpha}_{s-},\alpha_{s},e)N(\de e,\de s).
\end{align}

\par Given  a bounded measurable map $(x,a) \in \R\x \Ab\mapsto r(x,a)\in \R$, we consider  the expected gain function  
\begin{align}\label{def : V}
(t,x,\alpha)\in  [0,T]\times \R\times \Ac \mapsto  J(t,x;\alpha) &:=\EE\left[ \int_{t}^{ T}   r(X^{t,x,\alpha}_{s-},\alpha_{s})\de N_{s}\right],
\end{align}
together with the value function 
\begin{align}\label{def : hat V}
V(t,x):=\sup_{\alpha \in \Ac^{t}} J(t,x;\alpha),\;(t,x)\in [0,T]\x \R.
\end{align}
All throughout the paper, we make the following standard assumption, which will in particular ensure that $V$ is the unique (bounded) viscosity solution of the associated Hamilton-Jacobi-Bellman equation, see Proposition \ref{prop: HJB saut + continuite} below.

\begin{Assumption}\label{asmp: cont coefs}
For each $e\in \R$, $(x,a)\in \R\x \Ab\mapsto (b(x,a,e), r(x,a))$ is  continuous. Moreover, $(b,r)$ is bounded.
\end{Assumption}

\begin{Remark} Note that boundedness of the coefficients $b$ and $r$ is not essential in the following arguments. One could assume only linear growth in space, uniformly in the control.  We make the above (strong) assumptions to avoid unnecessary complexities. 
\end{Remark}

\subsection{Dynamic programming equation and optimal Markovian control}

Let us now state the well-known characterization of $V$ in terms of the theory of viscosity solutions. 

As usual, we say that a lower-semicontinuous (resp.~upper-semicontinuous) locally bounded map  $U: [0,T]\x \R\mapsto \R$ is a viscosity supersolution (resp.~subsolution) of
\begin{align}
&\partial_{t}\vp + \sup_{a\in \Ab}\left( \int \vp(\cdot,\cdot+b(\cdot,a,e))\nu(\de e)-\vp+r(\cdot,a)\right)\lambda =0,\; \mbox{ on } [0,T)\times \R,\label{eq: PDE HJB Jump}
\end{align}
if for all $(t,x)\in [0,T)\x \R$ and all $C^{1}$ functions $\vp :[0,T]\x \R\mapsto \R$ such that $(t,x)$ attains a minimum (resp.~maximum) of $U-\vp$ on $ [0,T)\x \R$ we have 
\begin{align*}
&\kappa\left\{\partial_{t}\vp(t,x) + \sup_{a\in \Ab}\left( \int U(t,x+b(x,a,e))\nu(\de e)-U(t,x)+r(x,a)\right)\lambda\right\}\le 0
\end{align*}
with $\kappa=1$ (resp.~$\kappa=-1$). 

\begin{Proposition}\label{prop: HJB saut + continuite} $V$ is a continuous and bounded viscosity solution of \eqref{eq: PDE HJB Jump} such that 
\begin{align}\label{eq: cond bord HJB saut} 
\lim_{t'\uparrow T,x'\to x} V(t',x')=0,\;x\in \R.
\end{align}
Moreover, comparison holds for \eqref{eq: PDE HJB Jump} in the class of bounded functions.
\end{Proposition}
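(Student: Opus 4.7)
The plan is to establish the four claims in the proposition in the classical order: boundedness and the terminal condition are essentially free, while continuity and the viscosity property rest on the dynamic programming principle (DPP), and comparison is handled by a standard variable-doubling argument. Boundedness and the boundary condition come directly from $\EE[N_{T}-N_{t}]=\lambda(T-t)$ and boundedness of $r$: the estimate $|V(t,x)|\le \|r\|_\infty\lambda(T-t)$ gives both $\|V\|_\infty\le\|r\|_\infty\lambda T$ and \eqref{eq: cond bord HJB saut} uniformly in $x$.

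Continuity and the viscosity property both follow from the DPP
$$V(t,x)=\sup_{\alpha\in\Ac^{t}}\EE\left[\int_{t}^{\tau} r(X^{t,x,\alpha}_{s-},\alpha_{s})\,dN_{s}+V(\tau, X^{t,x,\alpha}_{\tau})\right],\quad \tau \text{ an } \F^{t}\text{-stopping time in }[t,T],$$
which itself is obtained via a standard measurable-selection argument (using Assumption~\ref{asmp: cont coefs}, compactness of $\Ab$, and boundedness of $r$). Continuity of $V$ in $(t,x)$ is then deduced by showing that the flow $(t,x)\mapsto X^{t,x,\alpha}$ is continuous in probability, uniformly in $\alpha$---this uses only continuity and boundedness of $b$ together with the integrability of $N_{T}$. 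For the viscosity inequalities, I would fix $\varphi\in C^{1}$ touching $V$ from below (resp.\ from above) at an interior $(t,x)$, apply the DPP with $\tau=(t+h)\wedge T$ and a constant control $a\in\Ab$ (resp.\ an $\varepsilon$-optimal control), and condition on the number of jumps of $N$ over $[t,t+h]$:
$$\P(N_{t+h}-N_{t}=0)=1-\lambda h+o(h),\quad \P(N_{t+h}-N_{t}=1)=\lambda h+o(h),\quad \P(N_{t+h}-N_{t}\ge 2)=o(h).$$
On the no-jump event the state stays at $x$, while on the one-jump event it ends at $x+b(x,a,E_{1})$ with $E_{1}\sim\nu$ and jump time uniform on $[t,t+h]$. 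Substituting the first-order Taylor expansion of $\varphi$ at $(t,x)$, dividing by $h$ and letting $h\downarrow 0$, yields the required supersolution (resp.\ subsolution) inequality with $V$ itself (not $\varphi$) inside the non-local integral---the point being that $V\ge\varphi$ (resp.\ $V\le\varphi$) is only used on the term $V(t+h,x)/h$ arising from the no-jump event, while the $O(\lambda h)$ non-local contribution remains expressed in $V$.

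Comparison in the class of bounded viscosity solutions is classical for integro-PDEs of this form: since $\nu$ is a probability measure and $b$ is bounded, the non-local operator is a bounded perturbation of the local part on $L^{\infty}$, and the standard doubling-variables proof used for local HJB equations adapts without essential modification, after an exponential-in-time reweighting $u\mapsto e^{Kt}u$ with $K$ large enough to absorb the (uniform-on-compacts) moduli of continuity of $b$ and $r$, which exist thanks to compactness of $\Ab$.

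\textbf{Main obstacle.} The most delicate step is the continuity of $V$ under only continuity (rather than Lipschitzianity) of $b$ in $x$: the usual Grönwall-type estimate for the flow is unavailable. One must instead propagate continuity of $b$ through the a.s.\ finitely many jumps on $[0,T]$ by combining uniform continuity on compacts with dominated convergence based on the integrability of $N_{T}$, and crucially uniformly in $\alpha\in\Ac^{t}$, so that continuity survives the supremum in the DPP.
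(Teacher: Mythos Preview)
Your outline is sound and would yield a correct proof, but the logical architecture differs from the paper's in a way that is worth highlighting. You establish continuity of $V$ \emph{first}, via continuity of the flow $(t,x)\mapsto X^{t,x,\alpha}$ uniformly in $\alpha$, and then invoke the strong DPP (with $V$ itself under the expectation) to derive the viscosity property. The paper proceeds in the reverse order: it applies the \emph{weak} dynamic programming principle of Bouchard--Touzi, which only involves the semicontinuous envelopes $V_{*}$ and $V^{*}$, to obtain that $V_{*}$ is a supersolution and $V^{*}$ a subsolution; continuity of $V$ then falls out of comparison, since $V^{*}\le V_{*}$. This route entirely sidesteps the flow-continuity argument and, with it, the ``main obstacle'' you identify: under mere continuity (not Lipschitz) of $b$ in $x$, proving that $\sup_{\alpha}|J(t,x;\alpha)-J(t,x';\alpha)|\to 0$ is indeed delicate (the random modulus of continuity depends on the jump marks $e_{i}$ and on $N_{T}$, and one must carefully justify swapping the supremum and the limit). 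The weak-DPP approach is more robust here and requires no a priori regularity of $V$.

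A secondary point: your strong DPP, as stated with $V(\tau,X^{t,x,\alpha}_{\tau})$ inside the expectation, presupposes measurability of $V$ and typically some regularity to glue together $\varepsilon$-optimal controls. In the pure compound-Poisson setting this can be justified, but it is extra work; the weak DPP avoids it. For comparison, the paper exhibits an explicit strict supersolution $(t,x)\mapsto (1+x^{2})e^{-Ct}$ and appeals to standard arguments, which is morally the same as your exponential reweighting.
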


\begin{proof} The argument being standard, we only sketch it. \bru{First note that the continuity at $T$ follows immediately from the fact that $r$ is bounded, namely $|V(t,\cdot)|\le (T-t)\norm{r}_\infty$ for $t\le T$.}
Fix $h\in (0,T-t]$, \bru{$t\le T$ and $x\in \R$}. Let $\tau^{t}_{1}$ be the first jump of $N$ after time $t$. Denote by $V_*$ and $V^*$ the lower- and upper-semicontinuous envelopes of $V$, \ie
\[V_{*}(t',x'):=\liminf_{(s,y)\to (t',x')} V(s,y)\;,\;V^{*}(t',x'):=\limsup_{(s,y)\to (t',x')} V(s,y)\,.\]
It follows from the same arguments as in \cite{bouchard2011weak} that $V$ satisfies the (weak) dynamic programming principle 
\begin{align}
 &\sup_{\alpha \in \Ac^{t}} \E\left[V_*(\tau_{1}^{t}\wedge h,X^{t,x,\alpha}_{\tau^{t}_{1}\wedge h})+r(X^{t,x,\alpha}_{\tau^{t}_{1}-},\alpha_{\tau^{t}_{1}})\1_{\{\tau^{t}_{1}\le h\}} \right] \nonumber \\
 &\le V(t,x) \label{eq: DPP Vtheta}\\
 &\le \sup_{\alpha \in \Ac^{t}} \E\left[V^{*}(\tau_{1}^{t}\wedge h,X^{t,x,\alpha}_{\tau^{t}_{1}\wedge h})+r(X^{t,x,\alpha}_{\tau^{t}_{1}-},\alpha_{\tau^{t}_{1}})\1_{\{\tau^{t}_{1}\le h\}} \right]\,. \nonumber
\end{align}
Following \cite{bouchard2011weak} again and using \cite[Lemma 22]{bouchard2002stochastic}, this implies that $V_{*}$ and $V^{*}$ are, respectively, a super- and a subsolution in the viscosity sense of \eqref{eq: PDE HJB Jump}. Since $r$ and $b$ are bounded, the map $(t,x)\mapsto (1+x^{2})e^{-Ct}$ is also a viscosity supersolution of the above, as soon as $C>0$ is large enough. Standard arguments then imply that comparison holds for the above Hamilton-Jacobi-Bellman equation in the class of bounded  functions (or even with linear growth), and therefore that $V_{*}=V^{*}$, meaning that $V$ is continuous. 
\end{proof}

We next prove the existence of an optimal Markovian control. In the following, we denote by $\mathfrak{A}$ the collection \lc{of} $\Ab$-valued Borel maps on $\R_+\x \R$.

\begin{Proposition}\label{prop: existence control optimal Poisson} For all $(t,x)\in \R_+\times \R$, there exists $\hat \alpha[t,x] \in \Ac^{t}$ such that  
$V(t,x)=J(t,x;\hat \alpha[t,x] )$.
It takes the form  
\[\hat \alpha[t,x]=\sum_{i\ge 0} \1_{(\tau^{t}_{i},\tau^{t}_{i+1}]}\hat{\ar}(\cdot,X^{t,x,\hat \alpha[t,x]}_{\tau^{t}_{i}}) \]
in which $\tau^{t}_{i}$ is the $i$-th jump of $N$ after time $t$\bru{,} for $i\ge 1$, with $\tau^{t}_{0}:= t$, and $\hat \ar \in \mathfrak{A}$ satisfies 
\[\hat{\ar}(t',x') \in \underset{a\in \Ab}{\rm argmax}\left\{ \int V(t',x'+b(x',a,e))\nu(\de e)+r(x',a) \;\right\},\;(t',x')\in \R_{+}\x \R\,.\]
\end{Proposition}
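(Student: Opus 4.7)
The plan is to proceed in three steps: (1) build a Borel measurable selector $\hat{\ar}$ of the maximiser in the Hamiltonian via a measurable maximum theorem, (2) construct the candidate control by a pathwise piecewise-constant concatenation between the jump times of $N$ driven by $\hat{\ar}$, and (3) verify optimality by iterating the dynamic programming inequalities of Proposition \ref{prop: HJB saut + continuite}.

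\textbf{Step 1.} Since $V$ is continuous and bounded (Proposition \ref{prop: HJB saut + continuite}), since $b$ and $r$ are continuous in $(x,a)$ (Assumption \ref{asmp: cont coefs}) and $b$ is bounded, dominated convergence entails that
\[
H(t',x',a) := r(x',a)+\int V(t',x'+b(x',a,e))\nu(de)
\]
is Borel measurable on $\R_+\x\R\x\Ab$ and continuous in $a$ for each $(t',x')$. As $\Ab$ is compact, the argmax set is non-empty and compact-valued; the Kuratowski--Ryll-Nardzewski measurable selection theorem (or Berge's maximum theorem combined with a standard selection argument) then yields a Borel map $\hat{\ar}:\R_+\x\R\to\Ab$ with $\hat{\ar}(t',x')\in\argmax_{a\in\Ab} H(t',x',a)$.

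\textbf{Step 2.} Define $\hat\alpha[t,x]$ and $X:=X^{t,x,\hat\alpha[t,x]}$ jointly by recursion on $i\ge 0$, starting from $X_t=x$: on the stochastic interval $(\tau^{t}_{i},\tau^{t}_{i+1}]$, set $\hat\alpha_s:=\hat{\ar}(s,X_{\tau^{t}_{i}})$, which through \eqref{def : X} determines $X_{\tau^{t}_{i+1}}=X_{\tau^{t}_{i}}+b(X_{\tau^{t}_{i}},\hat\alpha_{\tau^{t}_{i+1}},\Delta N_{\tau^{t}_{i+1}})$. Since $X_{\tau^{t}_{i}}$ is $\Fc^{t}_{\tau^{t}_{i}}$-measurable and $\hat{\ar}$ is Borel, the process $\hat\alpha[t,x]$ is $\F^t$-predictable with values in $\Ab$, hence belongs to $\Ac^t$.

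\textbf{Step 3.} Taking $h=T-t$ in \eqref{eq: DPP Vtheta} and using $V_*=V^*=V$, one has
\[
V(t,x)=\sup_{\alpha\in\Ac^t}\E\!\left[V(\tau^t_1\wedge T,X^{t,x,\alpha}_{\tau^t_1\wedge T})+r(X^{t,x,\alpha}_{\tau^t_1-},\alpha_{\tau^t_1})\un_{\{\tau^t_1\le T\}}\right].
\]
Since $X^{t,x,\alpha}_{s-}=x$ for $s\le\tau^t_1$ and the law of $(\tau^t_1,\Delta N_{\tau^t_1})$ is explicit, a Fubini-type computation shows that the supremum is pointwise attained by $\alpha_{\tau^t_1}=\hat{\ar}(\tau^t_1,x)$, so that $\hat\alpha[t,x]$ realises equality in the DPP:
\[
V(t,x)=\E\!\left[V(\tau^t_1\wedge T,X_{\tau^t_1\wedge T})+r(x,\hat\alpha_{\tau^t_1})\un_{\{\tau^t_1\le T\}}\right].
\]
Iterating this identity between successive jump times by applying the strong Markov property of $N$ and the tower property at each $\tau^t_i$ yields, for every $n\ge 1$,
\[
V(t,x)=\E\!\left[V(\tau^t_n\wedge T,X_{\tau^t_n\wedge T})+\sum_{i=1}^{n}r(X_{\tau^t_i-},\hat\alpha_{\tau^t_i})\un_{\{\tau^t_i\le T\}}\right].
\]
Because $\tau^t_n\uparrow\infty$ a.s., $V$ is bounded with $V(T,\cdot)=0$, and $r$ is bounded with $\E[N_T]<\infty$, dominated convergence as $n\to\infty$ gives $V(t,x)=J(t,x;\hat\alpha[t,x])$, proving optimality.

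The main obstacle I expect is to set up the measurable selection and the iteration cleanly: one must argue jointly Borel measurability of $\hat{\ar}$ and predictability of the concatenated control, then justify that the supremum in the DPP is attained by the specific Markovian choice (not merely that a supremum exists). The tail argument in the iteration is routine thanks to the global boundedness of $r$ and $V$ and the finiteness of $\lambda T$.
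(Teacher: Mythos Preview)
Your proposal is correct and follows essentially the same approach as the paper: measurable selection of the pointwise maximiser (the paper cites \cite{BertsekasShreve.78} while you invoke Kuratowski--Ryll-Nardzewski/Berge, which is equivalent here), recursive construction of the piecewise-constant Markovian control, attainment of the one-step DPP supremum by conditioning on $\tau^t_1$ and using that $X$ is constant before the first jump, iteration via the tower/Markov property, and passage to the limit by dominated convergence. The paper spells out the conditioning step slightly more explicitly by writing the inner expectation as $\int V(\tau^t_1\wedge T,x_0+b(x_0,\cdot,e))\nu(de)+r(x_0,\cdot)$, which is exactly what your ``Fubini-type computation'' amounts to.
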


\begin{proof} Since  $V, b$ and $r$  are continuous, by Proposition \ref{prop: HJB saut + continuite} and Assumption \ref{asmp: cont coefs}, and since $\Ab$ is compact, we can find a Borel measurable map $(t,x)\mapsto \hat{\ar}(t,x)$ such that 
$
\hat{\ar}(t,x)$ belongs to $\argmax \{\int V(t,x+b(x,a,e))\nu(de)$ $+$ $r(x,a),$ $\;a\in \Ab \}$ for all $(t,x)$, 
see e.g.~\cite[Proposition 7.33, p.153]{BertsekasShreve.78}. Let us fix $(t_{0},x_{0})\in [0,T]\x \R$. By the dynamic programming principle in \eqref{eq: DPP Vtheta}, the continuity of $V$, and the definition of $\hat \ar$ above, 
\begin{align*}
V(t_{0},x_{0})
&=\sup_{\alpha \in \Ac^{t_{0}}} \E\left[V(\tau_{1}^{t_{0}}\wedge T,X^{t_{0},x_{0},\alpha}_{\tau^{t_{0}}_{1}\wedge T})+r(X^{t_{0},x_{0},\alpha}_{\tau^{t_{0}}_{1}-},\alpha_{\tau^{t_{0}}_{1}}) \1_{\{\tau^{t_{0}}_{1}\le T\}}\right] \\
&=\sup_{\alpha \in \Ac^{t_{0}}} \E\left[\left(\int V(\tau_{1}^{t_{0}}\wedge T, x_{0}+b(x_{0},\alpha_{\tau^{t_{0}}_{1}\wedge T},e))\nu(\de e)+r(x_{0},\alpha_{\tau^{t_{0}}_{1}})\right)\1_{\{\tau^{t_{0}}_{1}\le T\}} \right] \\
&=\E\left[\left(\int V(\tau_{1}^{t_{0}}\wedge T,x_{0}+b(x_{0},\hat \alpha_{\tau^{t_{0}}_{1}\wedge T},e)) \nu(\de e)+r( x_{0},\hat \alpha_{\tau^{t_{0}}_{1}}) \right)\1_{\{\tau^{t_{0}}_{1}\le T\}}\right] \\
&=\E\left[V(\tau_{1}^{t_{0}}\wedge T,X^{t_{0},x_{0},\hat \alpha}_{\tau^{t_{0}}_{1}\wedge T})+r(X^{t_{0},x_{0},\hat \alpha}_{\tau^{t_{0}}_{1}-},\hat \alpha_{\tau^{t_{0}}_{1}})\1_{\{\tau^{t_{0}}_{1}\le T\}} \right] 
\end{align*}
in which $\hat{\alpha}:=\hat{\ar}(\cdot,x_{0})\1_{(t_{0},\tau^{t_{0}}_{1}]}$. 

\noindent For ease of notations, we now set $\vartheta_{1}:=\tau_{1}^{t_{0}}\wedge T$ and $X_{1}:=X^{t_{0},x_{0},\hat \alpha}_{\tau^{t_{0}}_{1}\wedge T}$. 
By the same reasoning as above, we have, for a fixed $\omega\in \Omega$, 
\begin{align*}
V(\vartheta_{1}(\omega),X_{1}(\omega))&=\E\left[V(\tau_{1}^{\vartheta_{1}(\omega)}\wedge T,X^{\vartheta_{1}(\omega),X_{1}(\omega),\hat \alpha(\omega)}_{\tau^{\vartheta_{1}(\omega)}_{1}\wedge T})+r(X^{\vartheta_{1}(\omega),X_{1}(\omega),\hat \alpha(\omega)}_{\tau^{\vartheta_{1}(\omega)}_{1}-},\hat \alpha_{\tau^{\vartheta_{1}(\omega)}_{1}}(\omega))\1_{\{\tau^{\vartheta_{1}(\omega)}_{1}\le  T\}} \right]
\end{align*}
in which 
\[\hat{\alpha}(\omega): =\hat{\ar}(\cdot,x_{0})\1_{(t_{0},\tau^{t_{0}}_{1}(\omega)]}+\hat{\ar}(\cdot,X_{1}(\omega))\1_{(\tau^{t_{0}}_{1}(\omega),\tau^{\vartheta_{1}(\omega)}_{1}]}.\]
The right-hand side of the above coincides $\P$-a.e. with 
\begin{align*}
&\E\left[V(\tau_{1}^{\vartheta_{1}}\wedge T,X^{\vartheta_{1},X_{1},\hat \alpha}_{\tau^{\vartheta_{1}}_{1}\wedge T})+r(X^{\vartheta_{1},X_{1},\hat \alpha}_{\tau^{\vartheta_{1}}_{1}-},\hat \alpha_{\tau^{\vartheta_{1}}_{1}})\1_{\{\tau^{\vartheta_{1}}_{1}\le T\}}\bigg\vert\Fc_{\vartheta_{1}} \right]
\\
&=\E\left[V(\tau_{2}^{t_{0}}\wedge T,X^{t_{0},x_{0},\hat \alpha}_{\tau^{t_{0}}_{2}\wedge T})+r(X^{t_{0},x_{0},\hat \alpha}_{\tau^{t_{0}}_{2}-},\hat \alpha_{\tau^{t_{0}}_{2}})\1_{\{\tau^{t_{0}}_{2}\le T\}}\bigg\vert\Fc_{\tau^{t_{0}}_{1}\wedge T} \right].
\end{align*}

\noindent Let us  complete the definition of $\hat \alpha$ by now letting it be defined by 
\[\hat \alpha=\sum_{i\ge 0} \1_{(\tau^{t_{0}}_{i},\tau^{t_{0}}_{i+1}]}\hat{\ar}(\cdot,X^{t_{0},x_{0},\hat \alpha}_{\tau^{t_{0}}_{i}}).\]
By iterating the above procedure, we have 
\begin{align*}
V(t_{0},x_{0})
&=\E\left[V(\tau_{n}^{t_{0}}\wedge T,X^{t_{0},x_{0},\hat \alpha}_{\tau^{t_{0}}_{n}\wedge T})+\int_{t_{0}}^{\tau^{t_{0}}_{n}\wedge T}r(X^{t_{0},x_{0},\hat \alpha}_{s-},\hat \alpha_{s})\de N_{s}\right],\;n\ge 1.
\end{align*}
Since $\tau^{t_{0}}_{n}\to \infty$ $\P$-a.s. as $n\to \infty$, it now follows from the dominated convergence theorem \bru{and \eqref{eq: cond bord HJB saut}} that 
\begin{align*}
V(t_{0},x_{0})
&=\E\left[\int_{t_{0}}^{T}r(X^{t_{0},x_{0},\hat \alpha}_{s-},\hat \alpha_{s})\de N_{s}\right]. 
\end{align*}
\end{proof}

\section{Diffusive approximation}\label{sec: diffusive limit}
 
As already mentioned, the characterization of Propositions \ref{prop: HJB saut + continuite} and \ref{prop: existence control optimal Poisson} allows one to estimate numerically the value function and the associated optimal control. However, the integro-differential equation \eqref{eq: PDE HJB Jump} is non-local and the computational cost of its numerical resolution increases as $\lambda$ grows. On the other hand, we can expect that our pure-jump problem admits a diffusive limit as $\lambda\to \infty$ which is, by its local nature, much easier to solve numerically, and can serve as a good proxy of the original problem as soon as $\lambda$ is large enough. 

In this section, we begin by defining the diffusion control problem that is the candidate for the diffusive limit of our pure-jump problem. We then study the regularity of the corresponding value function, from which we will be able to derive our main approximation result, see Theorem \ref{thm : ecart V et bar V} below, and construct approximate optimal controls, see Proposition \ref{prop: construction control eps opti}.  Finally, we identify a first order correction term in Subsection \ref{subsec: first order correction}, which is extended to higher orders in Subsection \ref{subsec: higher order expansions}.
 
\subsection{The candidate diffusive limit}

Given $\epsilon\in (0,1)$, we now take as $\lambda$  the intensity   
$$ 
\lambda_{\epsilon}:=\epsilon^{-1}
$$
so that it is large for $\epsilon>0$ small. To ensure the existence of a diffusive limit, we need to assume that the jump coefficient $b$ introduced in Section \ref{sec : pure jump problem} is of the form
$$
b_{\epsilon}=\epsilon b_{1}+\sqrt{\epsilon} b_{2}\,
$$
for  two bounded measurable maps $b_1,b_{2}: \R\x \Ab\times \R\mapsto \R$,  each satisfying    Assumption \ref{asmp: cont coefs} (with $b_{i}$ in place of $b$, $i=1,2$), and with $b_2$ satisfying the additional Assumption \ref{asmp: b2 conds}.
\begin{Assumption}\label{asmp: b2 conds}
The function $b_2$ satisfies:
\begin{align}\label{eq: hyp centrage et ellipticite b2}
 \int b_{2}(x,a,e) \nu(\de e)=0 \;\mbox{ for all $(x,a)\in \R\x \Ab$, }\mbox{ and }  \inf_{(x,a)\in \R\x \Ab} \int \abs{b_{2}(x,a,e)}^{2} \nu(\de e)=: \eta>0.
\end{align}
\end{Assumption}
In the above, the coefficient $b_{1}$ should be interpreted as a drift term while $b_{2}$ is a volatility. The respective scaling in $\epsilon$ and $\sqrt{\epsilon}$ together with Assumption \ref{asmp: b2 conds} are required to ensure that our pure-jump problem actually admits a diffusive limit of the form \eqref{eq: def SDE bar X} below. Problems where this scaling of coefficient is appropriate involve many jumps of small relative size, with a variance of the same order as their drift over time.

\noindent Likewise, we consider the value function 
\begin{align}\label{def : hat V eps}
V_{\epsilon}(t,x) :=\sup_{\alpha\in \Ac^{t}} J_{\epsilon}(t,x;\alpha)\mbox{ with }\;J_{\epsilon}(t,x;\alpha):=\frac{1}{\lambda_{\epsilon}}\E\left[ \int_{t}^{ T}   r(X^{t,x,\alpha}_{s-},\alpha_{s})\de N_{s}\right],\; (t,x)\in [0,T]\x \R.
\end{align}
Note that the scaling by $1/\lambda_{\epsilon}$ means that (up to a constant factor $T-t$) we consider the gain by average unit of actions on the system. Indeed $\E[N_{T}-N_{t}]=\lambda_{\epsilon}(T-t)$ and the control applies only at jump times of $N$. Note that we omit the dependence of $N$ on $\epsilon$, for ease of notations. 
\vs2

We shall see that $V_\epsilon$, together with the associated optimal policy, can be approximated by considering its diffusive limit as $\epsilon\to 0$. The coefficients of the associated Brownian diffusion SDE are given by:
\begin{align*}
\mu(x,a):=  \int b_{1}(x,a,e) \nu(\de e),\; \sigma(x,a):= \left( \int \abs{b_{2}(x,a,e)}^{2} \nu(\de e)\right)^{\frac12},\;(x,a)\in \R\x \Ab.
\end{align*}

From now on, we assume that they satisfy the following. 
\begin{Assumption}\label{asmp: Lipschitz diffusion coefs}
The maps $x\in \R \mapsto\mu(x,a)$, $x\in \R \mapsto\sigma(x,a)$ and  $x\in \R \mapsto r(x,a)$  are  Lipschitz, uniformly in $a\in\Ab$, with respective Lipschitz constants $\norm{\mu}_{\rm Lip}$, $\norm{\sigma}_{\rm Lip}$ and $\norm{r}_{\rm Lip}$.
\end{Assumption}%
\noindent More precisely, let $\bar \P$ be a probability measure on $\D$ and let $W$ be a stochastic process such that $W$ is a $\bar \P$-Brownian motion,  let $\bar \F^{t}=(\bar \Fc^{t}_{s})_{s\ge 0}$
be the $\bar \P$-augmentation of the filtration generated by $(W_{\cdot \vee t}-W_{t})$, and let
$\bar \Ac^{t}$ be the collection of $\bar \F^{t}$-predictable processes. Given $\bar \alpha \in \bar \Ac^{t}$, we  can then define $\bar X^{t,x,\bar\alpha}$ as the unique strong  solution of 
\begin{align}\label{eq: def SDE bar X}
\bar X^{t,x,\bar\alpha}=x+\int_{t}^{\cdot} \mu(\bar X^{t,x,\bar\alpha}_{s},\bar\alpha_{s})\de s +\int_{t}^{\cdot} \sigma(\bar X^{t,x,\bar\alpha}_{s},\bar\alpha_{s})\de W_{s}.
\end{align}
The candidate diffusive limit problem is then defined as
\begin{align*}
\bar V(t,x):=\sup_{\bar\alpha \in \bar \Ac^{t}} \bar \E\left[\int_{t}^{T}r(\bar X^{t,x, \bar\alpha}_{s}, \bar\alpha_{s}) \de s\right],\;(t,x)\in [0,T]\x \R
\end{align*}
where $\bar \E$ is the expectation operator under $\bar \P$.

\subsection{Regularity properties}\label{subsec: regul bar V}

We first prove  that $\bar V$ is a smooth solution of its associated Hamilton-Jacobi-Bellman equation. Most importantly, its second order space derivative is $\beta$-H\"older continuous, for some $\beta \in (0,1]$. This will allow us, in Section \ref{subsec : conv speed} below, to prove that it actually coincides with the diffusive limit of $V_{\epsilon}$ as $\epsilon$ vanishes. The precise value of the H\"older exponent $\beta$ will be further discussed in Remark \ref{rem : sur choix beta} below.

\begin{Proposition}\label{prop: HJB bar V} $\bar V$ belongs to $C^{1,2}_{b}([0,T\bru{)}\x \R)\bru{\cap C^{0}([0,T\bru{]}\x \R)}$ and is the unique bounded solution of 
\begin{align}
&\partial_{t} \bar V +\sup_{\bar a\in \Ab}\left(\mu(\cdot,\bar a)\partial_{x} \bar V  +\frac12 \sigma^{2}(\cdot,\bar a) \partial^{2}_{xx}\bar V +r(\cdot,\bar a)\right)=0,\; \mbox{on } [0,T)\x \R,\label{eq: PDE bar V}\\
&\bar V(T,\cdot)=0,\; \mbox{on } \R. \label{eq: cond T bar V}
\end{align}
Moreover, there exists $\beta \in (0,1]$, such that $\partial^{2}_{xx} \bar V$ is (uniformly) $\beta$-H\"older continuous in space \bru{on $[0,T)\x \R$}.
\end{Proposition}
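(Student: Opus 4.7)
The plan is to follow the classical route for Hamilton--Jacobi--Bellman equations associated with uniformly elliptic controlled diffusions: first identify $\bar V$ as the unique bounded viscosity solution of \eqref{eq: PDE bar V}--\eqref{eq: cond T bar V}, and then upgrade this to classical $C^{1,2}_{b}$ regularity with H\"older second space derivative via an Evans--Krylov type theorem, which is available because the Hamiltonian is convex in $(p,M)$ and the problem is uniformly parabolic.

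First I would show that $\bar V$ is a bounded, continuous viscosity solution of \eqref{eq: PDE bar V}--\eqref{eq: cond T bar V} and that comparison holds in the bounded class. This is the Brownian counterpart of Proposition \ref{prop: HJB saut + continuite}: a (weak) dynamic programming principle for $\bar V$ combined with standard stability of viscosity solutions yields the sub/super-solution property; $|\bar V|\le T\|r\|_{\infty}$ gives boundedness; continuity at $T$ and uniqueness in the bounded class follow from the standard parabolic comparison principle under Assumption \ref{asmp: Lipschitz diffusion coefs} and compactness of $\Ab$ (see e.g.~Fleming and Soner).

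Second, I would upgrade this to $C^{1,2}_b$ regularity. The Hamiltonian
\[
H(x,p,M) := \sup_{a\in \Ab}\big\{\mu(x,a)p + \tfrac12 \sigma^{2}(x,a) M + r(x,a)\big\}
\]
is convex in $(p,M)$ as a supremum of affine functions, while Assumption \ref{asmp: b2 conds} yields $\sigma^{2}(x,a)\ge \eta>0$ uniformly in $(x,a)\in\R\x\Ab$, so the equation is uniformly parabolic. Under Assumption \ref{asmp: Lipschitz diffusion coefs}, all coefficients are moreover bounded and Lipschitz in $x$ uniformly in $a$. The classical interior $C^{1+\beta/2,\,2+\beta}$ estimate for convex fully nonlinear parabolic equations (Evans--Krylov; see e.g.~Krylov's monographs on nonlinear parabolic equations, or Lieberman) then delivers some $\beta\in(0,1)$, depending only on $\eta$ and on the Lipschitz and sup bounds of the coefficients, such that $\partial^{2}_{xx}\bar V$ is locally $\beta$-H\"older in space. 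Translation invariance of those constants (they involve only norms of $\mu,\sigma,r$, not the base point) then upgrades the local estimate to a global one on $\R$ via a standard covering argument.

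Finally, I would propagate the regularity up to $t=T$ by extending $r\equiv 0$ and $b^{1},b^{2}$ arbitrarily past $T$, which is consistent with $\bar V\equiv 0$ on $[T,T+\delta]\x\R$ being the value function of the extended control problem. Every point of $[0,T]\x\R$ then sits in the parabolic interior of $[0,T+\delta]\x\R$, so the interior estimate of the previous step yields $\bar V\in C^{1,2}_{b}([0,T]\x\R)$ with $\partial^{2}_{xx}\bar V$ globally $\beta$-H\"older in space; a standard verification argument then shows that \eqref{eq: PDE bar V} holds pointwise. The main technical hurdle I expect is simply invoking the right version of Evans--Krylov: the interior result is classical, but the unbounded spatial domain has to be handled by translation invariance, and the behaviour at $t=T$ by the zero-extension trick---which is precisely where the vanishing terminal condition is used (otherwise one would need additional smoothness of the terminal datum, as mentioned in the introduction).
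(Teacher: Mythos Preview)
Your approach is correct and is a legitimate alternative to the paper's argument, but the route is genuinely different. The paper proceeds constructively: it first builds $C^{1,2}$ solutions $\bar V_n$ of \eqref{eq: PDE bar V} on the bounded cylinders $[0,T)\times(-n,n)$ with zero lateral and terminal data (via Lieberman's existence theory), then derives uniform-in-$n$ gradient bounds directly from the \emph{stochastic representation} of $\bar V_n$---a time-shift argument gives $\|\partial_t\bar V_n\|_\infty\le\|r\|_\infty$, while Lipschitz estimates on $\bar X$ together with first-exit-time moment bounds control $\partial_x\bar V_n$---and bootstraps to a bound on $\partial^2_{xx}\bar V_n$ via the equation and uniform ellipticity. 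The H\"older continuity of $\partial^2_{xx}\bar V_n$ is not obtained from Evans--Krylov directly but by applying an interior Krylov--Safonov estimate to $\partial_t\bar V_n$ and then feeding the finite-difference quotient $h^{-\beta}(\bar V_n(\cdot,\cdot+h)-\bar V_n)$ back through the linearised equation. An Arzel\`a--Ascoli argument then passes to the limit $n\to\infty$, and comparison identifies the limit with $\bar V$. Your route---viscosity characterisation first, then a black-box Evans--Krylov estimate globalised by translation invariance and extended across $t=T$ by the zero-continuation trick---is conceptually cleaner and shorter, but it forgoes the explicit probabilistic control on constants (e.g.\ $\|\partial_t\bar V\|_\infty\le\|r\|_\infty$) that the paper's hands-on estimates provide; your zero-extension device for regularity at $t=T$ is in fact slightly more than the paper spells out, since its proof only establishes the $H_{2+\beta}$ bound on $[0,T)\times\R$.
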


\begin{proof}%

\begin{enumerate}[label=\rm\alph*)]

\item We first show that $\bar V\in C^{1,2}_{b}([0,T)\x \R)\cap C^{0}([0,T]\x \R)$. \bru{Note that the continuity at $T$ follows again form the fact that $r$ is bounded: $|\bar V(t,\cdot)|\le (T-t)\|r\|_{\infty}$ for $t\le T$.} Let us set 
\[F(x,p,q) :=\sup_{\bar a\in \Ab}\left(\mu(x,\bar a) p  +\frac12 \sigma^{2}(x,\bar a) q +r(x,\bar a)\right),\; (x,p,q)\in \R^{3}\,,\]
and observe that, by Assumptions \ref{asmp: b2 conds} and \ref{asmp: Lipschitz diffusion coefs}, 
\begin{align}
\frac12\eta \abs{q-q'}\le& \abs{F(x,p,q)-F(x,p,q')}\le \frac12 \norm{\sigma}^{2}_{\infty}\abs{q-q'} \label{eq: F unif elliptic hyp 14.19 et 14.45}\\
vF(x,0,0)\le& \norm{r}_{\infty}(1+\abs{v}^{2})   \label{eq: hy 14.20}\\
\abs{F(x,p,q)-F(x',p',q')}\le& (\abs{p} \norm{\mu}_{\rm Lip}+\abs{q} \norm{\sigma}_{\infty}\norm{\sigma}_{\rm Lip}+\norm{r}_{\rm Lip})\abs{x-x'} \nonumber\\
&+ \norm{\mu}_{\infty}\abs{p-p'}+ \frac12 \norm{\sigma}_{\infty}^{2}\abs{q-q'} \label{eq : hyp 14.14'}
\end{align}
for all $(x,x',p,p',q,q',v)\in \R^{7}$. 
 
\noindent \bru{Assume for the moment that $q\mapsto F(x,p,q)$ is differentiable for all $(x,p)\in \R^{2}$.}
 For $n\ge 1$, existence of  a $C^{1,2}([0,T)\x \R)$ solution $\bar V_{n}$ to  \eqref{eq: PDE bar V} on $[0,T)\x (-n,n)$ with boundary condition $\bar V_{n}=0$ on $([0,T)\x \{-n,n\}) \cup (\{T\}\times [-n,n])$ follows from \cite[Theorem 14.24]{lieberman1996second}, \eqref{eq: F unif elliptic hyp 14.19 et 14.45}, \eqref{eq: hy 14.20} and \eqref{eq : hyp 14.14'}. \bru{It turns out that, using the notations of \cite[Theorem 14.24]{lieberman1996second},  $\bar V_{n}$ is even  in $H_{2+\theta_{B}}(B)$ for some $\theta_{B}\in (0,1)$, on each compact subset $B$ of $[0,T)\x (-n,n)$. These $H_{2+\theta_{B}}$-norms depend only on the upper and lower bounds on the derivative of $q\mapsto F(\cdot,q)$ and not on the fact that this map is differentiable. If it is not, one can thus first regularize $F$ with respect to its last argument, by using a sequence of smooth kernels, and then pass to the limit. The corresponding sequence will be uniformly bounded in $H_{2+\theta_{B}}(B)$ on each compact subset $B$ of $[0,T)\x (-n,n)$, so  that the limit will keep these bounds. By stability, the limit solves the required equation with the appropriate boundary conditions. See also the discussion is the paragraph preceding \cite[Theorem 14.24]{lieberman1996second}.}

\item We now provide uniform estimates on the gradients. Note that\bru{, by the Feynman-Kac formula and a comparison argument,} 
 \begin{align}\label{eq :def bar vn}  
\bar V_{n}(t,x)=\sup_{\bar\alpha \in \bar \Ac^{t}} \bar \E\left[\int_{t}^{T\wedge \tau^{t,x,\bar\alpha}_{n}}r(\bar X^{t,x, \bar\alpha}_{s}, \bar\alpha_{s}) \de s\right] 
\end{align}
where 
\[ \tau^{t,x,\bar\alpha}_{n}:=\inf\{s\ge t : \bar X^{t,x, \bar\alpha}_{s}\notin (-n,n)\}\,.\]
It follows that, for $h\in (0,T-t\bru{]}$,
 \begin{align*}
\bar V_{n}(t+h,x)=\sup_{\bar\alpha \in \bar \Ac^{t}} \bar \E\left[\int_{t}^{(T-h)\wedge \tau^{t,x,\bar\alpha}_{n}}r(\bar X^{t,x, \bar\alpha}_{s}, \bru{\bar \alpha_{s}}) \de s\right] 
\end{align*}
which readily implies that 
$
\abs{\bar V_{n}(t+h,x)-\bar V_{n}(t,x)}\le h \|r\|_{\infty},
$
and therefore 
\begin{align}\label{eq: borne partial t Vn}
\bru{\frac1T \norm{ \bar V_{n}} \vee} \norm{\partial_{t} \bar V_{n}}\le  \norm{r}_{\infty}.
\end{align}

Similarly, for $h\in (-1,1)$ such that $x+h\in [-n,n]$, 
\begin{align*}
\abs{\bar V_{n}(t,x+h)-\bar V_{n}(t,x)}
\le 
 \sup_{\bar\alpha \in \bar \Ac^{t}} \bar \E\left[\bru{\norm{r}_{{\rm Lip}}}\int_{t}^{T}\abs{\bar X^{t,x+h, \bar\alpha}_{s}-\bar X^{t,x, \bar\alpha}_{s}} \de s+\bru{\norm{r}_{\infty}}\abs{\tau^{t,x+h,\bar\alpha}_{n}-\tau^{t,x,\bar\alpha}_{n}}\right].
\end{align*}
The first term is handled by using the uniform Lipschitz continuity in space of $(\mu,\sigma)$: 
\begin{align}\label{eq: ecart X s + h}
 \bar \E\left[\int_{t}^{T}\abs{\bar X^{t,x+h, \bar \alpha}_{s}-\bar X^{t,x, \bar \alpha}_{s}} \de s\right] \le C_{1} \abs{h}
\end{align}
in which $C_{1}>0$ does not depend on $n$. 
As for the second term, Assumption \ref{asmp: Lipschitz diffusion coefs}, \bblue{\eqref{eq: hyp centrage et ellipticite b2}} and our boundedness assumptions \bblue{on $(b_{1},b_{2})$, and therefore on $(\mu,\sigma)$,} allow us to apply \cite[\bru{Theorem 2.3}]{bouchard2017first}\footnote{\bblue{Note that their Assumption (L) is not required since we are considering a finite time interval $[0,T]$, this can be easily seen from the proof of this Theorem.}} \bblue{with $\pi=0$, $r=1$  and}  for  $P$ of the form $\vp(\bar X^{t,x+h, \bar \alpha})$ or $\vp(\bar X^{t,x, \bar \alpha})$ for a smooth \bru{bounded} function $\vp$\bblue{, with bounded first and second derivatives,} such that $\vp(y)=y+n$ for $y\in [-n,\bblue{-n+1}]$ and $ \vp(y)=n-y$ for $y\in  [\bblue{n-1},n]$. It implies that 
\begin{align*} 
\bar \E\left[\abs{\tau^{t,x+h,\bar \alpha}_{n}-\tau^{t,x,\bar \alpha}_{n}}\right]&
\le 
C_{2}\l \bar\E\left[\abs{\bar X^{t,x+h, \bar \alpha}_{\tau^{t,x+h,\bar \alpha}_{n}\wedge \tau^{t,x,\bar \alpha}_{n}}-\bar X^{t,x, \bar \alpha}_{\tau^{t,x+h,\bar \alpha}_{n}\wedge \tau^{t,x,\bar \alpha}_{n}}}\right]  \le C'_{2} \abs{h}
\end{align*}
for some positive constants $C_{2}$ and $C'_{2}$ independent of $n$. 
Combined with \eqref{eq: ecart X s + h}, this leads to 
\begin{align}\label{eq: borne DVn}
\norm{\partial_{x} \bar V_{n}}_{\infty}\le \bru{\norm{r}_{{\rm Lip}}C_{1}+\norm{r}_{\infty}C'_{2}}\,,
\end{align}
\bru{in which, here and below, $\|\vp\|_{\infty}:=\sup\{|\vp(t,x)|: (t,x)\in [0,T)\x \R\}$ for a map $\vp:[0,T)\x\R\mapsto \R$.}
The fact that $\bar V_{n}$ solves  \eqref{eq: PDE bar V} combined with \eqref{eq: F unif elliptic hyp 14.19 et 14.45},   \eqref{eq: borne partial t Vn} and \eqref{eq: borne DVn} then proves that 
\begin{align}\label{eq : borne D2 bar Vn}
\norm{\partial^{2}_{xx} \bar V_{ n}}_{\infty}\le C_{3} 
\end{align}
for some $C_{3}>0$ that does not depend on $n$. 

\item We now prove the uniform H\"older continuity of the gradients and second derivatives. \bru{As in a) above, let us first assume that $F$ is $C^{1}$}. Given a neighbourhood ${\cal O}\subset [0,T]\x [-n,n] $ of a point $(t,x)$, we derive as in \cite[Section 3.1]{andrews2004fully} that 
there exists $C>0$ and $\beta \in(0,1]$, that depend only on   the ellipticity constant  $\eta$ and the Lipschitz constants of $F$ with respect to its second and third arguments, such that 
\[\abs{\partial_{t}\bar V_{n}(t',x')-\partial_{t}\bar V_{n}(t,x)}\le C \left(\abs{t'-t}^{\frac{\beta}{2}}+\abs{x'-x}^{\beta}\right)\sup_{\cal O}\norm{\partial_{t} \bar V_{n}}, \;\mbox{ for } (t',x')\in {\cal O}\,.\]
\bru{If $F$ is not $C^{1}$, one can first regularize it by using a sequence of kernels and then pass to the limit to obtain that the above still holds for the original $F$.}
In view of  \eqref{eq: borne partial t Vn}, this implies that
\begin{align}\label{eq: holder vt}
\abs{\partial_{t}\bar V_{n}(t',x')-\partial_{t}\bar V_{n}(t,x)}\le C \left(\abs{t'-t}^{\frac{\beta}{2}}+\abs{x'-x}^{\beta}\right) \norm{r}_{\infty}, \;\mbox{ for } (t',x')\in [0,T]\x \R.
\end{align}
Up to changing $\beta\in (0,1]$, one can prove similarly that 
\begin{align}\label{eq: holder vx}
\abs{\partial_{x}\bar V_{n}(t',x')-\partial_{x}\bar V_{n}(t,x)}\le C \left(\abs{t'-t}^{\frac{\beta}{2}}+\abs{x'-x}^{\beta}\right)  , \;\mbox{ for } (t',x')\in [0,T]\x \R,
\end{align}
for some $C>0$ that does not depend on $n$.
We now set $\Delta_{h} \bar V_{n}:= h^{-\beta}\left(\bar V_{n}(\cdot,\cdot+h)- \bar V_{n}\right)$, $h\in \R$. \bblue{Again, up to  mollifying $F$ with a smooth bounded kernel with derivatives bounded by $1$, we can assume that $F$ is $C^{1}$. Then, for $t<T$ and $x\in (-n+h,n-h)$, 
\begin{align*}
&h^{-\beta}\left\{F(x+h,\partial_{x}\bar V_{n}(t,x+h),\partial^{2}_{xx}\bar V_{n}(t,x+h) )-F(x,\partial_{x}\bar V_{n}(t,x),\partial^{2}_{xx}\bar V_{n}(t,x) )\right\}
\\
&=h^{-\beta}\left\{\partial_{x} F(x^{1}_{h},p^{1}_{h},q^{1}_{h})h+\partial_{p} F(x^{2}_{h},p^{2}_{h},q^{2}_{h})[\partial_{x}\bar V_{n}(t,x+h)-\partial_{x}\bar V_{n}(t,x)]\right.\\
&\;\;\;+\left.\partial_{q} F(x^{3}_{h},p^{3}_{h},q^{3}_{h})[\partial^{2}_{xx}\bar V_{n}(t,x+h)-\partial^{2}_{xx}\bar V_{n}(t,x)]\right\}
 \end{align*}
 for some $x^{i}_{h}\in [x,x+h]$, $p^{i}_{h}\in [\partial_{x}\bar V_{n}(t,x+h)\wedge \partial_{x}\bar V_{n}(t,x),\partial_{x}\bar V_{n}(t,x+h)\vee \partial_{x}\bar V_{n}(t,x)]$ and $q^{i}_{h}\in [\partial^{2}_{xx}\bar V_{n}(t,x+h)\wedge \partial^{2}_{xx}\bar V_{n}(t,x),\partial^{2}_{xx}\bar V_{n}(t,x+h)\vee \partial^{2}_{xx}\bar V_{n}(t,x)]$, for $i=1,2,3$.}
\bblue{It follows that $\Delta_{h} \bar V_{n}$} satisfies a linearized equation of the form
\begin{align*}
0=\partial_{t} \Delta_{h} \bar V_{n}+ A_{h} \partial_{x} \bru{(}\Delta_{h} \bar V_{n}\bru{)}+  B_{h} \partial^{2}_{xx} \bru{(}\Delta_{h} \bar V_{n}\bru{)} +C_{h} h^{1-\beta}
\end{align*}
at every point $(t,x)\in [0,T)\x \R$ such that $x+h\in (-n,n)$, 
in which, \bblue{by  Assumption \ref{asmp: Lipschitz diffusion coefs},  \eqref{eq: hyp centrage et ellipticite b2}} and the estimates in b) above, $(A_{h},C_{h})_{h>0}$ is uniformly bounded and $\inf_{h>0} \inf_{[0,T]\x \R }B_{h}\ge \eta/2\bru{>0}$.  Hence, 
\[\abs{\partial^{2}_{xx} \Delta_{h} \bar V_{n}}\le 2\eta^{-1}\left(\abs{\partial_{t} \Delta_{h} \bar V_{n}} +  \abs{A_{h}}  \abs{\partial_{x} \Delta_{h} \bar V_{n}}+ \abs{C_{h}} h^{1-\beta}\right)\]
We conclude from \eqref{eq: holder vt}-\eqref{eq: holder vx} that 
\begin{align}\label{eq: borne holder en espace D2Vn}
\abs{\partial^{2}_{xx} \bar V_{n}(t,x')-\partial^{2}_{xx}\bar V_{n}(t,x)}\le C\abs{x'-x}^{\beta},\; x,x'\in (-n,n),\;t<T\bru{,}
\end{align}
for some $C>0$ independent on $n$. 
If we now set $\Delta_{h} \bar V_{n}=h^{-\frac{\beta}{2}}(\bar V_{n}(\cdot+h,\cdot)-\bar V_{n})$, then the same type of arguments leads to 
\begin{align}\label{eq: borne holder en temps D2Vn}
\abs{\partial^{2}_{xx} \bar V_{n}(t',x)-\partial^{2}_{xx}\bar V_{n}(t,x)}\le C\abs{t'-t}^{\frac{\beta}{2}},\; x  \in (-n,n),\;t,t'<T\bru{,}
\end{align}
for some $C>0$ independent on $n$. 

\item  It follows from steps b)~and c)~that $(\bar V_{n})_{n\ge 1}$ is uniformly bounded in $H_{2+\beta}([0,T) \x \R)$, as defined in \cite[Section IV.1]{lieberman1996second}.  By the Arzel\`a-Ascoli theorem, it admits a subsequence that converges  in $H_{2+\beta}(B)$, for any compact set $B\subset [0,T)\x \R$\bru{, to a limit $\bar V_{\infty}$}.  \bru{This limit shares the same  upper-bound in $H_{2+\beta} ([0,T)\x \R)$ as $(\bar V_{n})_{n\ge 1}$.} \bru{Since each $\bar V_{n}$ solves \eqref{eq: PDE bar V} on $[0,T)\times (-n,n)$ and satisfies the boundary condition \eqref{eq: cond T bar V} on $[-n,n]$, it follows that $\bar V_{\infty}$ solves \eqref{eq: PDE bar V} on $[0,T)\times \R$ and \eqref{eq: cond T bar V} on $\R$. As $\bar V$ is also a bounded solution of the same equation, comparison implies that $\bar V_{\infty}=\bar V$. 
 }
 \end{enumerate}
\end{proof}

\begin{Remark}\label{rem : sur choix beta}
\begin{enumerate}[label=(\rm\alph*)]
\item Let $\bar \ar : [0,T)\x \R \mapsto \Ab$ be a measurable map satisfying 
\begin{align}\label{eq: def bar a theta} 
\bar \ar\in \underset{a\in \Ab}{\rm argmax}\left(\mu(\cdot,a)\partial_{x} \bar V  +\frac12 \sigma^{2}(\cdot,a) \partial^{2}_{xx}\bar V +r(\cdot,a)\right)\;\mbox{ on } [0,T)\x \R,
\end{align}
see e.g.~\cite[Proposition 7.33, p.153]{BertsekasShreve.78}. Assume that there exists $\beta_{\circ}\in (0,1)$ such that \bblue{$(\mu,\sigma,r)(\cdot,\bar \ar)$} belongs to $H_{\beta_{\circ}}([0,T)\x \R)$, then we can take $\beta=\beta_{\circ}$. This follows from \cite[Section IV.14, p390]{ladyzhenska1988linear}.
\item If $(\mu(\cdot,\bar \ar),\sigma(\cdot,\bar \ar),r(\cdot,\bar \ar))$ has more regularity, one can obviously obtain more regularity on $\bar V$ by, for instance, differentiating the associated partial differential equation. 
\item In the case where  $\sigma$ does not depend on its $a$-argument, then one can appeal to  \cite[Theorem 12.16]{lieberman1996second} to deduce that   we can take $\beta=1$. This follows from the  Lipschitz continuity of $F$.
\end{enumerate}
\end{Remark}

\subsection{Convergence speed toward the diffusive limit}\label{subsec : conv speed}

We now exploit the H\"older regularity stated above  to prove that   $V_{\epsilon}$ converges to $\bar V$ at a rate $\epsilon^{\frac{\beta}{2}}$ as $\epsilon$ vanishes. We shall see in Section \ref{subsec: approx ctrl} below that it provides an $\epsilon^{\frac{\beta}{2}}$-optimal control for the pure-jump problem.  In general, it can not be improved, see Example \ref{example : first correction} in Section \ref{subsec: first order correction} below.

\begin{Theorem}\label{thm : ecart V et bar V} For all $(t,x)\in [0,T]\x \R$ and $\epsilon>0$, 
\begin{align*}
\abs{V_{\epsilon}-\bar V}(t,x)&\le\ \sup_{\alpha\in \Ac^{t}}  \E\left[\bblue{\int_{t}^{T} |\delta r_{\epsilon}|(X^{t,x,\alpha}_{s},\alpha_{s})  \de s}\right]
\end{align*}
in which 
\begin{align}\label{eq: def delta r eps}
\delta r_{\epsilon}:=
\epsilon^{-1}\bru{\int \left(\bar V(\cdot,\cdot+b_{\epsilon})-\bar V\right)\nu(\de e)}- \bru{\mu} \partial_{x} \bar V-\frac12 \bru{\sigma^{2}}\partial^{2}_{xx} \bar V
\end{align}
satisfies
\begin{align}\label{eq: borne delta r epsilon dans thm} 
\norm{\delta r_{\epsilon}}_{\infty}\le C^{\epsilon}_{K}\epsilon^{\frac{\beta}{2}}
\end{align}
 with  
\begin{align*}
C^{\epsilon}_{K}:=&\bblue{ \frac12  \norm{\partial^{2}_{xx} \bar V}_{\infty}(\epsilon^{1-\frac{\beta}{2}} \|b_1\|^{2}_{\infty} +2\epsilon^{\frac{1-\beta}2} \|b_1\|_{\infty} \|b_2\|_{\infty})+\frac{K}{2}(\epsilon^{\frac12} \|b_1\|_{\infty} +\|b_2\|_{\infty})^{2+\beta}},
\end{align*}
where $K>0$ is the H\"older constant of $\partial^{2}_{xx}\bar V$ with respect to its space variable.

\noindent In particular, 
\[\limsup_{\epsilon \downarrow 0} \epsilon^{-\frac{\beta}{2}} \norm{V_{\epsilon}(t,\cdot)-\bar V(t,\cdot)}_{\infty}\le \frac12 (T-t) K( \|b_2\|_{\infty})^{2+\beta}\,,\; t\bru{\le}T.\] 
\end{Theorem}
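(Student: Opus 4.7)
My plan is to bound $V^\epsilon-\bar V$ from above by applying It\^o's formula to $\bar V$ composed with the pure-jump process, to bound $\bar V-V^\epsilon$ from above by feeding the diffusion's optimal Markov selector into the jump dynamics, and then to control the resulting Taylor-type residual $\delta r^\epsilon$ using the $\beta$-H\"older regularity of $\partial^{2}_{xx}\bar V$ obtained in Proposition \ref{prop: HJB bar V}.

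For the first inequality, fix $\alpha\in\Ac^t$ and set $X=X^{t,x,\alpha}$. Since $\bar V\in C^{1,2}_b$, It\^o's formula for jump processes together with $\bar V(T,\cdot)=0$ and the compensator $\lambda_\epsilon\nu(\de e)\de s$ of $N$ yield
\begin{align*}
\bar V(t,x)\;=\;\E\!\left[\int_t^T\!\left\{-\partial_t\bar V(s,X_s)-\tfrac{1}{\epsilon}\!\int\!\bigl(\bar V(s,X_s+b^\epsilon)-\bar V(s,X_s)\bigr)\nu(\de e)\right\}\!\de s\right].
\end{align*}
Substituting $-\partial_t\bar V=\sup_{\bar a\in\Ab}\{\mu\partial_x\bar V+\tfrac12\sigma^{2}\partial^{2}_{xx}\bar V+r\}$ from \eqref{eq: PDE bar V}, dominating the supremum by its value at $\bar a=\alpha_s$, and using $\mu=\int b^1\nu(\de e)$, $\sigma^2=\int|b^2|^2\nu(\de e)$ together with $\E[\int r\,\de N_s]=\lambda_\epsilon\E[\int r\,\de s]$, yields
\begin{align*}
\bar V(t,x)\;\ge\;J^\epsilon(t,x,\alpha)\,-\,\E\!\left[\int_t^T\!\int\delta r^\epsilon(X_s,\alpha_s,e)\,\nu(\de e)\,\de s\right],
\end{align*}
so supremising over $\alpha$ delivers one side of the target estimate. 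For the reverse direction I use a Borel maximiser $\bar\ar$ of the diffusion's HJB (supplied by \cite[Proposition~7.33]{BertsekasShreve.78}, as already invoked in Proposition \ref{prop: existence control optimal Poisson}) and mirror its induction to construct a predictable control $\hat\alpha=\sum_i\bar\ar(\cdot,X^{t,x,\hat\alpha}_{\tau_i^t})\un_{(\tau_i^t,\tau_{i+1}^t]}\in\Ac^t$. Along $\hat\alpha$ the pointwise HJB is saturated, the It\^o identity becomes an equality, and $V^\epsilon\ge J^\epsilon(\cdot,\hat\alpha)$ gives the matching reverse bound; combining both sides proves the first displayed inequality of the theorem.

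The remaining analytic work is the estimate on $\int\delta r^\epsilon(x,a,e)\,\nu(\de e)$. Taylor's theorem with integral remainder in the space variable gives $\bar V(x+h)-\bar V(x)=h\partial_x\bar V(x)+h^2\int_0^1(1-u)\partial^{2}_{xx}\bar V(x+uh)\de u$. Plugging in $h=b^\epsilon=\epsilon b^1+\sqrt\epsilon b^2$, dividing by $\epsilon$ and writing $b^\epsilon/\epsilon=b^1+\epsilon^{-1/2}b^2$, the only contribution that diverges pointwise in $e$ is $\epsilon^{-1/2}b^2\partial_x\bar V(x)$, which is eliminated upon $\nu$-integration by the centering condition $\int b^2\nu(\de e)=0$ of Assumption \ref{asmp: b2 conds}. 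The residue is then split via $\partial^{2}_{xx}\bar V(x+ub^\epsilon)=\partial^{2}_{xx}\bar V(x)+[\partial^{2}_{xx}\bar V(x+ub^\epsilon)-\partial^{2}_{xx}\bar V(x)]$: after cancelling $\tfrac12|b^2|^2\partial^{2}_{xx}\bar V$ from the definition of $\delta r^\epsilon$, the constant part produces a term of order $\tfrac12(\epsilon|b^1|^2+2\sqrt\epsilon b^1 b^2)\partial^{2}_{xx}\bar V$, while the H\"older increment is bounded by $K|ub^\epsilon|^\beta$ via Proposition \ref{prop: HJB bar V}. Since $|b^\epsilon|\le\sqrt\epsilon(\sqrt\epsilon\|b^1\|_\infty+\|b^2\|_\infty)$, writing $\epsilon^{-1}|b^\epsilon|^{2+\beta}=\epsilon^{\beta/2}(\sqrt\epsilon\|b^1\|_\infty+\|b^2\|_\infty)^{2+\beta}$ extracts the announced rate $\epsilon^{\beta/2}$, and assembling both pieces reproduces $C^\epsilon_K$. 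The asymptotic statement follows by retaining only the $\tfrac12 K\|b^2\|_\infty^{2+\beta}$ contribution as $\epsilon\downarrow 0$.

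The main obstacle is that $\delta r^\epsilon$ itself diverges like $\epsilon^{-1/2}$ pointwise in $e$, so the centering condition of Assumption \ref{asmp: b2 conds} must be invoked at precisely the right step, inside the $\nu$-integral, so that the divergent first-order contribution is killed before any H\"older bound is applied. A secondary but non-trivial technicality is the measurable construction of the near-optimal jump control $\hat\alpha$ from the Borel selector $\bar\ar$, which requires faithfully replicating the induction-over-jump-times of Proposition \ref{prop: existence control optimal Poisson}.
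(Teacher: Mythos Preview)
Your argument is correct and close in spirit to the paper's, but the packaging differs. The paper does not split into two verification directions. Instead, after deriving the Taylor identity, it observes that $\bar V$ is a classical (hence viscosity) solution of the \emph{modified} jump HJB
\[
\partial_t\bar V+\sup_{a\in\Ab}\frac1\epsilon\int\bigl(\bar V(\cdot,\cdot+b^\epsilon(\cdot,a,e))-\bar V+\epsilon(r-\delta r^\epsilon)(\cdot,a,e)\bigr)\nu(\de e)=0,
\]
invokes the comparison/uniqueness of Proposition~\ref{prop: HJB saut + continuite} to identify $\bar V$ as the value of the pure-jump problem with reward $r-\delta r^\epsilon$, and then bounds $|V^\epsilon-\bar V|$ via $|\sup f-\sup g|\le\sup|f-g|$. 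Your route replaces this single identification by a two-sided verification (It\^o plus sub-optimality for one inequality, It\^o plus an explicit Markov control built from $\bar\ar$ for the other). The trade-off: the paper's argument is shorter and avoids the measurable-selector construction, but it relies on Proposition~\ref{prop: HJB saut + continuite} applied to the modified reward; your argument is more self-contained and actually anticipates the proof of Proposition~\ref{prop: construction control eps opti}, which the paper carries out separately by exactly the It\^o-based verification you describe. The analysis of $\delta r^\epsilon$ (Taylor with integral remainder, then H\"older on $\partial^2_{xx}\bar V$) is the same in both.

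One further point in your favour: you correctly flag that $\delta r^\epsilon$ contains the term $\epsilon^{-1/2}b^2\partial_x\bar V$ and hence diverges pointwise in $e$, so that the stated bound $\|\delta r^\epsilon\|_\infty\le C^\epsilon_K\epsilon^{\beta/2}$ can only be meant for $\int\delta r^\epsilon(\cdot,e)\nu(\de e)$ after the centering condition kills this term. The paper's proof effectively works with the $\nu$-integrated quantity as well, so your reading is the right one.
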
 

\begin{proof} Since  $\bar V\in C^{1,2}_{b}([0,T\bru{)}\x \R)$,
\bblue{
\begin{align*}
&\bar V(t,x+b_{\epsilon}(x,a,e))-\bar V(t,x)\\
&=\partial_{x}\bar V(t,x)b_{\epsilon}(x,a,e)+\frac12\partial^{2}_{xx}\bar V(t,x)|b_{\epsilon}(x,a,e)|^{2}
+\frac12(\partial^{2}_{xx}\bar V(t,x_{\epsilon})-\partial^{2}_{xx}\bar V(t,x))|b_{\epsilon}(x,a,e)|^{2}
 \end{align*}
 for some $x_{\eps}$ that lies in the interval formed by $x$ and $x+b_{\epsilon}(x,a,e)$. By the left-hand side of \eqref{eq: hyp centrage et ellipticite b2}, the definition of $(\mu,\sigma)$, and since $\partial^{2}_{xx} \bar V$ is $\beta$-H\"older continuous in space with constant $K$,
\begin{align*}
&\left|\frac1\epsilon \int \left(\bar V(t,x+b_{\epsilon}(x,a,e))-\bar V(t,x)\right)\nu(\de e)- \mu(x,a) \partial_{x}\bar V(t,x)-\frac12\sigma^{2}(x,a)\partial^{2}_{xx}\bar V(t,x)\right|
\\
&\le \frac12 \norm{\partial^{2}_{xx} \bar V}_{\infty}(\epsilon \|b_1\|^{2}_{\infty} +2\epsilon^{\frac12} \|b_1\|_{\infty} \|b_2\|_{\infty})+\epsilon^{\frac{\beta}{2}}\frac{K}{2}(\epsilon^{\frac12} \|b_1\|_{\infty} +\|b_2\|_{\infty})^{2+\beta}
\end{align*}
Hence, 
\begin{align}\label{eq: approx edp}
\mu\partial_{x} \bar V +\frac12 \sigma^{2} \partial^{2}_{xx}\bar V+r
&=\frac1\epsilon \int \left(\bar V(\cdot,\cdot+b_{\epsilon}(\cdot,e))-\bar V(t,x)+\epsilon (r-\delta r_{\epsilon})\right)\nu(\de e)
\end{align} 
where $\delta r_{\epsilon}$ is the continuous function, defined in \eqref{eq: def delta r eps}, and satisfies
\begin{align*}
\abs{\delta r_{\epsilon}}\le& \frac12  \norm{\partial^{2}_{xx} \bar V}_{\infty}(\epsilon \|b_1\|^{2}_{\infty} +2\epsilon^{\frac12} \|b_1\|_{\infty} \|b_2\|_{\infty})+\epsilon^{\frac{\beta}{2}}\frac{K}{2}(\epsilon^{\frac12} \|b_1\|_{\infty} +\|b_2\|_{\infty})^{2+\beta}.
\end{align*}}
Combined with Proposition \ref{prop: HJB bar V}, this shows that $\bar V$ is a smooth solution of 
\begin{align}\label{eq: approx edp 2}
\begin{cases}\displaystyle \partial_{t} \bar V+\sup_{a\in \Ab} \frac1\epsilon  \int \left( \bar V(\cdot,\cdot+b_{\epsilon}(\cdot,a,e))-\bar V+\epsilon(r(\cdot,a)-\bblue{\delta r_{\epsilon}(\cdot,a))}\right)\nu(\de e)=0,\; \mbox{on}\; [0,T)\x \R,\\
\displaystyle \bar V(T,\cdot)=0,\; \mbox{ on } \R.
\end{cases}
\end{align}
Applying Proposition \ref{prop: HJB saut + continuite} (with the appropriate coefficients), this implies that 
\begin{align*}
\bar V(t,x)&=\sup_{\alpha\in \Ac^{t}}  \E\left[\int_{t}^{T}  \int\epsilon ( r- \delta r_{\epsilon})\bblue{(X^{t,x,\alpha}_{s-},\alpha_{s}) }N(\de e,\de s)\right],
\end{align*}
so that, by the definition of $V_\epsilon$, 
\begin{align*}
\abs{V_{\epsilon}-\bar V}(t,x)&\le \sup_{\alpha\in \Ac^{t}}  \E\left[\int_{t}^{T}  \int \epsilon \abs{\delta r_{\epsilon}}\bblue{(X^{t,x,\alpha}_{s-},\alpha_{s}) } N(\de e,\de s)\right]\\
&= \sup_{\alpha\in \Ac^{t}}  \E\left[\int_{t}^{T}      \abs{\delta r_{\epsilon}}\bblue{(X^{t,x,\alpha}_{s},\alpha_{s}) }  \de s\right].
\end{align*}
\end{proof}

\subsection{Construction of a $\epsilon^{\frac{\beta}{2}}$-optimal control for the pure-jump problem}\label{subsec: approx ctrl}

We now show that  an $\epsilon^{\frac{\beta}{2}}$-optimal control for  \eqref{def : hat V eps} can be constructed by considering a measurable map $\bar \ar: [0,T)\x \R \mapsto \Ab$  satisfying 
\begin{align}\label{eq: def bar a theta}
\bar \ar\in \underset{\bar a\in \Ab}{\rm argmax}\left(\mu(\cdot,\bar a)\partial_{x} \bar V +\frac12 \sigma^{2}(\cdot,\bar a) \partial^{2}_{xx}\bar V +r(\cdot,\bar a)\right)\;\mbox{ on } [0,T)\x \R,
\end{align}
see e.g.~\cite[Proposition 7.33, p.153]{BertsekasShreve.78}, and define $\bar \alpha^{t,x} \in \Ac^{t}$ by 
\begin{align*}
\bar \alpha^{t,x}_{s}=\bar \ar(s,X^{t,x,\bar \alpha^{t,x}}_{s-}), \;s\in [t,T\bru{)},
\end{align*}
recall  \eqref{def : X}. As it is driven by a compound Poisson process,  the couple $(X^{t,x,\bar \alpha^{t,x}},\bar \alpha^{t,x})$ is well-defined.

\begin{Proposition}\label{prop: construction control eps opti} For all $(t,x)\in [0,T\bru{)}\x \R$ and $\epsilon>0$, $\bar \alpha^{t,x}$ is $\epsilon^{\frac\beta 2}$-optimal for $V_{\epsilon}$. Namely, 
\begin{align*}
\frac1{\lambda_{\epsilon}} \E\left[\int_{t}^{T}r(X^{t,x,\bar \alpha^{t,x}}_{s-},\bar \alpha^{t,x}_{s}) \de N_{s}\right]
 \ge  V_{\epsilon}(t,x)-2(T-t)C^{\epsilon}_{K}\epsilon^{\frac{\beta}{2}}\bru{.}
\end{align*}

\end{Proposition}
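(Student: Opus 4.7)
The strategy is to show the feedback control $\bar\alpha^{t,x}$ achieves a value close to $\bar V(t,x)$ for the pure-jump problem, and then combine this with the estimate of Theorem \ref{thm : ecart V et bar V} via the triangle inequality. Write $Y := X^{t,x,\bar\alpha^{t,x}}$ and $a_s := \bar\ar(s,Y_{s-})$ for brevity; note that the couple $(Y,\bar\alpha^{t,x})$ is well-defined by the compound-Poisson construction, as already noted before the statement.

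\textbf{Step 1: It\^o's formula.} Since $\bar V\in C^{1,2}_b([0,T]\x\R)$ by Proposition \ref{prop: HJB bar V} and $Y$ is a pure-jump semimartingale, It\^o's formula for jump processes yields
\begin{align*}
\bar V(T,Y_T)-\bar V(t,x)
=\int_t^T\!\!\partial_t\bar V(s,Y_s)\,ds
+\int_t^T\!\!\!\int\bigl(\bar V(s,Y_{s-}+b^\epsilon(Y_{s-},a_s,e))-\bar V(s,Y_{s-})\bigr)N(de,ds).
\end{align*}
Taking expectation, using $\bar V(T,\cdot)=0$ and the compensator $\lambda_\epsilon\nu(de)ds$ of $N$,
\begin{align*}
-\bar V(t,x)=\E\!\left[\int_t^T\!\!\left(\partial_t\bar V(s,Y_{s-})+\lambda_\epsilon\!\int\bigl(\bar V(s,Y_{s-}+b^\epsilon)-\bar V(s,Y_{s-})\bigr)\nu(de)\right)ds\right].
\end{align*}

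\textbf{Step 2: Relate the generator to $r$ via \eqref{eq: approx edp}.} Since $\bar\ar$ achieves the pointwise maximum in \eqref{eq: PDE bar V}, we have $\partial_t\bar V+\mu(\cdot,a_s)\partial_x\bar V+\tfrac12\sigma^2(\cdot,a_s)\partial^2_{xx}\bar V+r(\cdot,a_s)=0$ along $(s,Y_{s-})$. Identity \eqref{eq: approx edp}, evaluated at $\bar a=a_s$, then gives
\begin{align*}
\partial_t\bar V+\lambda_\epsilon\!\int\bigl(\bar V(\cdot,\cdot+b^\epsilon)-\bar V\bigr)\nu(de)+r(\cdot,a_s)=\int\delta r^\epsilon(\cdot,a_s,e)\nu(de).
\end{align*}
Plugging this into the identity from Step 1 produces
\begin{align*}
\bar V(t,x)=\E\!\left[\int_t^T r(Y_{s-},a_s)\,ds\right]-\E\!\left[\int_t^T\!\!\!\int\delta r^\epsilon(Y_{s-},a_s,e)\nu(de)\,ds\right].
\end{align*}

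\textbf{Step 3: Compensation of $dN_s$.} Because $r(Y_{s-},a_s)$ is $\F$-predictable and does not depend on $e$, the compensator formula yields $\E[\int_t^T r(Y_{s-},a_s)dN_s]=\lambda_\epsilon\E[\int_t^T r(Y_{s-},a_s)ds]$. Hence the first term equals $\lambda_\epsilon^{-1}\E[\int_t^T r(Y_{s-},a_s)dN_s]$, which is precisely the gain of the control $\bar\alpha^{t,x}$ in the pure-jump problem. Combining this with the bound \eqref{eq: borne delta r epsilon dans thm} on $\delta r^\epsilon$ gives
\begin{align*}
\left|\bar V(t,x)-\frac1{\lambda_\epsilon}\E\!\left[\int_t^T r(Y_{s-},a_s)dN_s\right]\right|\le(T-t)C^\epsilon_K\epsilon^{\beta/2}.
\end{align*}

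\textbf{Step 4: Conclude via triangle inequality.} Theorem \ref{thm : ecart V et bar V} gives $|V^\epsilon(t,x)-\bar V(t,x)|\le(T-t)C^\epsilon_K\epsilon^{\beta/2}$. Adding this to the estimate in Step 3 yields the claimed $2(T-t)C^\epsilon_K\epsilon^{\beta/2}$ gap between $V^\epsilon(t,x)$ and the value produced by $\bar\alpha^{t,x}$. The only nontrivial step is Step 2, which hinges on the key algebraic identity \eqref{eq: approx edp} already established in the proof of Theorem \ref{thm : ecart V et bar V}; everything else is a routine combination of It\^o's formula, compensation, and the triangle inequality.
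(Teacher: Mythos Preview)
Your proof is correct and follows essentially the same route as the paper: apply It\^o's formula to $\bar V$ along the controlled jump process, use the key identity \eqref{eq: approx edp} together with the optimality of $\bar\ar$ in \eqref{eq: PDE bar V}, and bound the residual via \eqref{eq: borne delta r epsilon dans thm}. The only cosmetic difference is that the paper writes the two verification inequalities (for $\bar\ar$ and for the sup over $a\in\Ab$) side by side and combines them directly, whereas you derive an exact identity for the gain of $\bar\alpha^{t,x}$ and then invoke Theorem \ref{thm : ecart V et bar V} for the second half via the triangle inequality; the net effect and the constant $2(T-t)C^{\epsilon}_{K}\epsilon^{\beta/2}$ are identical.
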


\begin{proof}
It follows from Proposition \ref{prop: HJB bar V}, \eqref{eq: def bar a theta} and   \eqref{eq: approx edp} that 
\begin{align*}
&\partial_{t} \bar V+\frac1\epsilon  \int \left( \bar V(\cdot,\cdot+b_{\epsilon}(\cdot,\bar \ar,e))-\bar V+\epsilon r(\cdot,\bar \ar)\right)\nu(\de e)\ge -\norm{\delta r_{\epsilon}}_{\infty},\;  \\
&\partial_{t} \bar V+\sup_{a\in \Ab}\frac1\epsilon  \int \left( \bar V(\cdot,\cdot+b_{\epsilon}(\cdot,a,e))-\bar V+\epsilon r(\cdot,a)\right)\nu(\de e)\le \norm{\delta r_{\epsilon}}_{\infty},   
\end{align*}
so that applying \bru{It\^{o}'s Lemma and using \eqref{eq: cond T bar V} leads} to 
\begin{align*}
 \bar V(t,x) - (T-t) \norm{\delta r_{\epsilon}}_{\infty} &\le \frac1{\lambda_{\epsilon}} \E\left[\int_{t}^{T} r(X^{t,x,\bar \alpha^{t,x}}_{s-},\bar \alpha^{t,x}_{s})\de N_{s}\right]\\
\bar V(t,x) + (T-t) \norm{\delta r_{\epsilon}}_{\infty} &\ge \sup_{\alpha \in \Ac^{t}} \frac1{\lambda_{\epsilon}}
\E \left[\int_{t}^{T}    r(X^{t,x,  \alpha}_{s-},  \alpha_{s}) \de N_{s}\right]=V_{\epsilon}(t,x).
\end{align*}
We conclude by appealing to \eqref{eq: borne delta r epsilon dans thm}. 
 \end{proof}


\subsection{First order correction term}\label{subsec: first order correction}

Under additional conditions, one can exhibit a first order correction term to improve the convergence speed in Theorem \ref{thm : ecart V et bar V} and Proposition \ref{prop: construction control eps opti}. From now on, we  assume the following.
\begin{Assumption}\label{ass: first order expansion}\hfill\vspace{-1em}
\begin{enumerate}[label=\rm\alph*.] 
\item The map $(t,x,a)\in [0,T)\x \R\x \Ab \mapsto \epsilon^{-\frac{\beta}{2}} \bblue{\delta r_{\epsilon}(t,x,a)}$ is continuous, uniformly in $\bblue{\epsilon \in (0,1)}$.
\item The pointwise limit
\begin{align}
 r_{1} &:=\lim_{\epsilon \to 0}    \bblue{ \epsilon^{-\frac{\beta}{2}} \delta r_{\epsilon}},\label{eq :def delta r i}
\end{align}
is well-defined on $[0,T)\x \R$.
\item Given 
$$
 \Ab_{0}:= \underset{\bar a\in \Ab}{\rm argmax}\left(\mu(\cdot,\bar a)\partial_{x} \bar V  +\frac12 \sigma^{2}(\cdot,\bar a) \partial^{2}_{xx}\bar V +r(\cdot,\bar a)\right),
 $$
  comparison holds in the sense of bounded discontinuous viscosity super- and subsolutions for 
\begin{align}
\begin{cases}\displaystyle\partial_{t}\vp + \max_{\bar a \in \Ab_{0}}  \left( \mu(\cdot,\bar a) \partial_{x}\vp+\frac12 \sigma(\cdot,\bar a)^{2} \partial^{2}_{xx}\vp+r_{1}(\cdot,\bar a)\right) =0,\; \mbox{ on } [0,T)\times \R\\
\displaystyle\vp(T,\cdot)=0\; \mbox{ on } \R\end{cases}\,\label{eq: PDE W i}
\end{align}
 and \eqref{eq: PDE W i} admits a (unique) bounded viscosity solution, denoted by $\delta \bar V^{(1)}$.
\end{enumerate}

\end{Assumption}

\begin{Remark} Let us comment the above: 
\vspace{-1em}
\begin{itemize}
\item[{\rm a)}] Note that $ r_{1}$ is bounded, see \eqref{eq: borne delta r epsilon dans thm} in Theorem \ref{thm : ecart V et bar V}.  The right-hand side term in \eqref{eq :def delta r i} therefore admits a limsup and a liminf. 
The condition \eqref{eq :def delta r i} implies that the limit is actually well-defined. This point will be further discussed in Remark \ref{rem : si r1 pas def} below.
\item[{\rm b)}] If $\bar V$ admits a continuous bounded third-order space derivative $\partial^{3}_{xxx}\bar V$, then one easily checks that $\beta=1$ and $r_{1}=\frac{1}{6}\int |b_{2}(\cdot,e)|^{3}\nu(\de e) \partial^{3}_{xxx}\bar V$, by a simple Taylor expansion.  
\item[{\rm c)}] \bblue{Assume that one can find a continuous map $\bar {\rm a}:[0,T)\x \R \mapsto \Ab$ such that $\bar{\rm a}(t,x)\in \Ab_{0}(t,x)$ for all $(t,x)\in [0,T)\x \R$, and 
$x\in \R\mapsto (\mu,\sigma)(x,\bar{\rm a}(t,x))$ is  Lipschitz uniformly in $t\le t_{0}$, for all $t_{0}<T$. Also assume that $r_{1}$ is continuous, then comparison holds, see e.g.~\cite[Section 8]{CrandallIshiiLions}. 
In general, this} can be checked on a case-by-case basis. 
\end{itemize}
\end{Remark}
 
Under the above conditions, $\delta \bar V^{(1)}$ is the  first order term in the difference $V_{\epsilon}-\bar V$, i.e.~\eqref{eq: conv speed V eps - bar V1 eps} below holds with
\begin{align}
\bar V^{(1)}_{\epsilon}&:= \bar V+ \epsilon^{\frac{\beta}{2}} \delta \bar V^{(1)}. \label{eq: def bar V i}
\end{align} 

\begin{Theorem}\label{Thm : first correction term}  Let Assumption \ref{ass: first order expansion} hold. Then, for all $(t,x)\in [0,T]\x \R$, 
\[
\lim_{\epsilon\downarrow 0}\epsilon^{-\frac{\beta}{2}}(V_{\epsilon}-\bar V)(t,x)=\delta \bar V^{(1)}(t,x)
\]
and therefore 
\begin{align}\label{eq: conv speed V eps - bar V1 eps}
\limsup_{\epsilon \downarrow 0} \epsilon^{-\frac{\beta}{2}}  \abs{  V_{\epsilon}(t,x) - \bar V^{(1)}_{\epsilon} (t,x)    } =0.
\end{align}
If  in addition $\delta \bar V^{(1)}$ is $C^{1,2}([0,T)\x \R)$ and $\partial^{2}_{xx}\delta \bar V^{(1)}$ is $\delta \beta$-H\"older continuous in space\bru{, uniformly on $[0,T)\x \R$,} for some constant $\delta \beta>0$ such that 
\begin{align}\label{eq: vitesse delta eps}
 \limsup_{\epsilon\downarrow 0} \epsilon^{-\frac{\delta \beta}{2}}\norm{\bblue{\epsilon^{-\frac{  \beta}{2}}\delta r_{\epsilon}-r_{1}}}_{\infty}<\infty,
\end{align}
 then the control defined by
\begin{align}\label{eq: def check alpha}
\check \alpha^{t,x}_{s}=\check \ar(s,X^{t,x,\check \alpha^{t,x}}_{s-}), \;s\in [t,T\bru{)}
\end{align}
with 
\begin{align}\label{eq: def a check}
\check \ar\in \underset{\bar a\in \Ab_{0}}{\rm argmax}\left\{ \mu(\cdot,\bar a) \partial_{x}\delta \bar V^{(1)}+\frac12 \sigma(\cdot,\bar a)^{2} \partial^{2}_{xx}\delta \bar V^{(1)} +r_{1}(\cdot,\bar a) \right\},\;\mbox{ on }   [0,T)\x \R,
\end{align}
satisfies 
\begin{align*} 
\frac1{\lambda_{\epsilon}} \E\left[\int_{t}^{T}      r(X^{t,x,\check \alpha^{t,x}}_{s-},\check \alpha^{t,x}_{s}) \de N_{s}\right]
 \ge  V_{\epsilon}(t,x)-\bblue{o(\epsilon^{\frac{\beta}{2} })},\;\mbox{ for all $\epsilon>0$}, 
\end{align*}
 \bru{where $o$ is a continuous bounded function such that $o(y)/y\to 0$ as $y\downarrow 0$.}
\end{Theorem}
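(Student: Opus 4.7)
The argument splits into the two assertions of the theorem.

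\textbf{Part 1 (convergence).} Set $W^{\epsilon}:=\epsilon^{-\beta/2}(V^{\epsilon}-\bar V)$. By Theorem~\ref{thm : ecart V et bar V} together with the fact that $\sup_{\epsilon\in(0,1]}C^{\epsilon}_K<\infty$, the family $(W^{\epsilon})$ is equibounded on $[0,T]\times\R$, with in addition $|W^{\epsilon}(t,x)|\le(T-t)\sup_{\epsilon}C^{\epsilon}_K$. The plan is to introduce the relaxed half-limits
\[
\overline W(t,x):=\limsup_{\substack{\epsilon\downarrow 0\\(t',x')\to(t,x)}} W^{\epsilon}(t',x'),\qquad \underline W(t,x):=\liminf_{\substack{\epsilon\downarrow 0\\(t',x')\to(t,x)}} W^{\epsilon}(t',x'),
\]
to prove that $\overline W$ is a bounded USC viscosity subsolution, and $\underline W$ a bounded LSC supersolution, of \eqref{eq: PDE W i}, with $\overline W(T,\cdot)=\underline W(T,\cdot)=0$ from the uniform bound above. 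Comparison from Assumption~\ref{ass: first order expansion}.c then gives $\overline W\le\delta\bar V^{1}\le\underline W$, and since $\underline W\le\overline W$ by construction, the three functions coincide; this yields the claimed pointwise convergence and hence \eqref{eq: conv speed V eps - bar V1 eps}.

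For the viscosity argument, take a $C^{1,2}$ test $\phi$ such that $\overline W-\phi$ has a strict local maximum at $(t_0,x_0)\in[0,T)\times\R$. Standard perturbation of maxima for relaxed limits produces $\epsilon_n\downarrow 0$ and $(t_n,x_n)\to(t_0,x_0)$ at which $V^{\epsilon_n}-(\bar V+\epsilon_n^{\beta/2}\phi)$ admits a local maximum; the test $\bar V+\epsilon_n^{\beta/2}\phi$ is smooth by Proposition~\ref{prop: HJB bar V}. The viscosity subsolution property of $V^{\epsilon_n}$ for \eqref{eq: PDE HJB Jump} with this test --- handling the nonlocal integral through the local-max bound $V^{\epsilon_n}\le\bar V+\epsilon_n^{\beta/2}\phi+\text{const}$ on a ball containing the range of $x_n+b^{\epsilon_n}(x_n,\cdot,\cdot)$, using $\|b^{\epsilon_n}\|_\infty\to 0$ --- allows the replacement of $V^{\epsilon_n}$ by the test function inside that integral. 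Subtracting the pointwise identity \eqref{eq: approx edp} satisfied by $\bar V$ and dividing by $\epsilon_n^{\beta/2}$ yields
\[
\partial_t\phi(t_n,x_n)+\sup_{a\in\Ab}\Big\{\mu(x_n,a)\partial_x\phi+\tfrac12\sigma^{2}(x_n,a)\partial^{2}_{xx}\phi+\epsilon_n^{-\beta/2}H(t_n,x_n,a)+\epsilon_n^{-\beta/2}\!\!\int\!\delta r^{\epsilon_n}(x_n,a,e)\nu(de)\Big\}\ge o(1),
\]
where $H(\cdot,a):=\partial_t\bar V+\mu(\cdot,a)\partial_x\bar V+\tfrac12\sigma^{2}(\cdot,a)\partial^{2}_{xx}\bar V+r(\cdot,a)\le 0$ with equality exactly on $\Ab^{0}$ by Proposition~\ref{prop: HJB bar V}. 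The divergence of $\epsilon_n^{-\beta/2}H(\cdot,a)$ to $-\infty$ off $\Ab^{0}$ forces the asymptotic argmaximizer into any prescribed neighbourhood of $\Ab^{0}(t_0,x_0)$; combined with Assumption~\ref{ass: first order expansion}.a-b, which makes $\epsilon^{-\beta/2}\int\delta r^{\epsilon}(\cdot,a,e)\nu(de)\to r^{1}(\cdot,a)$ uniformly in $a$, passage to the limit yields precisely the subsolution property of $\overline W$ for \eqref{eq: PDE W i}. The supersolution argument for $\underline W$ is symmetric.

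\textbf{Part 2 (near-optimal control).} Under the extra regularity on $\delta\bar V^{1}$ and \eqref{eq: vitesse delta eps}, I would run a verification. Taylor-expanding each component of $\bar V^{1,\epsilon}=\bar V+\epsilon^{\beta/2}\delta\bar V^{1}$ as in the proof of Theorem~\ref{thm : ecart V et bar V}, using the $\beta$- and $\delta\beta$-H\"older continuity of $\partial^{2}_{xx}\bar V$ and $\partial^{2}_{xx}\delta\bar V^{1}$, gives
\[
\frac{1}{\epsilon}\!\int\!\big(\bar V^{1,\epsilon}(\cdot,\cdot+b^{\epsilon}(\cdot,a,e))-\bar V^{1,\epsilon}\big)\nu(de)=\mu(\cdot,a)\partial_x\bar V^{1,\epsilon}+\tfrac12\sigma^{2}(\cdot,a)\partial^{2}_{xx}\bar V^{1,\epsilon}+\!\int\!\delta r^{\epsilon}(\cdot,a,e)\nu(de)+O(\epsilon^{(\beta+\delta\beta)/2}),
\]
uniformly in $(t,x,a)$. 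Evaluating at $a=\check a(t,x)$: (i) $\check a\in\Ab^{0}$ makes $\partial_t\bar V+\mu(\check a)\partial_x\bar V+\tfrac12\sigma^{2}(\check a)\partial^{2}_{xx}\bar V+r(\cdot,\check a)=0$ via Proposition~\ref{prop: HJB bar V}; (ii) the argmax property \eqref{eq: def a check} combined with \eqref{eq: PDE W i} yields $\partial_t\delta\bar V^{1}+\mu(\check a)\partial_x\delta\bar V^{1}+\tfrac12\sigma^{2}(\check a)\partial^{2}_{xx}\delta\bar V^{1}=-r^{1}(\cdot,\check a)$; (iii) \eqref{eq: vitesse delta eps} gives $\int\delta r^{\epsilon}(\cdot,\check a,e)\nu(de)=\epsilon^{\beta/2}r^{1}(\cdot,\check a)+o(\epsilon^{\beta/2})$. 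These three contributions combine so that
\[
\partial_t\bar V^{1,\epsilon}+\lambda_{\epsilon}\!\!\int\!\big(\bar V^{1,\epsilon}(\cdot,\cdot+b^{\epsilon}(\cdot,\check a,e))-\bar V^{1,\epsilon}\big)\nu(de)+r(\cdot,\check a)=o(\epsilon^{\beta/2}),
\]
uniformly in $(t,x)$. Applying It\^o's formula to $\bar V^{1,\epsilon}(s,X^{t,x,\check\alpha^{t,x}}_s)$ between $t$ and $T$, taking expectations so that the compensated jump integral drops out by boundedness, using the terminal condition $\bar V^{1,\epsilon}(T,\cdot)=0$, and converting $dN_s$ into $\lambda_{\epsilon}ds$ via the compensator, yields
\[
\bar V^{1,\epsilon}(t,x)=\lambda_{\epsilon}^{-1}\E\Big[\int_t^{T}r(X^{t,x,\check\alpha^{t,x}}_{s-},\check\alpha^{t,x}_s)\,dN_s\Big]+o(\epsilon^{\beta/2}).
\]
Combining with $V^{\epsilon}=\bar V^{1,\epsilon}+o(\epsilon^{\beta/2})$ from Part 1 delivers the near-optimality of $\check\alpha^{t,x}$.

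\textbf{Main obstacle.} The delicate step is Part 1, namely the passage to the limit of the sup over $\Ab$: although $\epsilon^{-\beta/2}H(\cdot,a)\to-\infty$ off $\Ab^{0}$ is intuitive, it must be controlled uniformly enough in $a$ over the compact set $\Ab$ to actually restrict the limiting sup to $\Ab^{0}(t_0,x_0)$, and one must accommodate the possible irregularity of the set-valued map $(t,x)\mapsto\Ab^{0}(t,x)$. The uniform continuity of $\epsilon^{-\beta/2}\delta r^{\epsilon}$ postulated in Assumption~\ref{ass: first order expansion}.a is precisely what enables the uniform-in-$a$ passage, and the discontinuous-comparison postulate in~\ref{ass: first order expansion}.c is what absorbs the irregularity of $\Ab^{0}$ and lets comparison deliver $\overline W=\underline W=\delta\bar V^{1}$.
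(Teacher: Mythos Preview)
Your proposal is correct and follows essentially the same strategy as the paper's own proof: relaxed semi-limits of $W^{\epsilon}=\epsilon^{-\beta/2}(V^{\epsilon}-\bar V)$ tested via the viscosity property of $V^{\epsilon}$ against $\bar V+\epsilon^{\beta/2}\phi$, extraction of a near-maximiser $\bar a_n$ that is forced into $\Ab^{0}(t_0,x_0)$ by the blow-up of the penalising term (your $\epsilon_n^{-\beta/2}H$), and then comparison from Assumption~\ref{ass: first order expansion}.c; for Part~2, a verification on $\bar V^{1,\epsilon}$ using the H\"older expansion of $\delta\bar V^{1}$ together with \eqref{eq: vitesse delta eps}, combined with the result of Part~1. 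The only cosmetic differences are that the paper selects the maximiser $\bar a_n$ first and invokes \eqref{eq: approx edp 2} directly, whereas you keep the $\sup_{a\in\Ab}$ and isolate the Hamiltonian defect $H$; and the paper packages your three contributions (i)--(iii) into the single remainder $\delta r^{1,\epsilon}$.
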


\begin{proof}\ We split the proof in two steps.
\\
a. Let us set  $W_{\epsilon}:=\epsilon^{-\frac{\beta}{2}}(V_{\epsilon}-\bar V)$ and consider its relaxed semi-limits
\[
W^{*}(t,x):=\limsup_{\substack{(t',x')\to (t,x)\\\epsilon\downarrow 0}} W_{\epsilon}(t',x'),\;W_{*}(t,x):=\liminf_{\substack{(t',x')\to (t,x)\\\epsilon\downarrow 0} } W_{\epsilon}(t',x')\,.
\]
Note that Theorem \ref{thm : ecart V et bar V} ensures that the above are well-defined and  bounded.
We claim that $W^{*}$ and $W_{*}$ are respectively  bounded sub- and supersolutions of \eqref{eq: PDE W i}.
For brevity, we will only include the \bblue{details for the} proof of the subsolution property, the supersolution property is proved similarly \bblue{and we only mention how to adapt the arguments}.
Fix $\vp\in C^{1,2}_{b}$ and let $(t_{\circ},x_{\circ})\in [0,T)\x \R$ achieve a maximum of $W^{*}-\vp$ on a ball $B_{k}:=\{(t,x) \in [0,T)\x \R : |t_{\circ}-t'|\le (T-t_{\circ})/2, \; |x_{\circ}-x'| \le k\}\subset [0,T)\x \R$, for some $k>0$. 
Then, there exist a sequence $(t_{\epsilon_{n}},x_{\epsilon_{n}})_{\epsilon_{n}}$ such that $\epsilon_{n}\to 0$, $W_{{\epsilon_n}}(t_{\epsilon_{n}},x_{\epsilon_{n}})\to W^{*}(t_{\circ},x_{\circ})$, \bblue{ $(t_{\epsilon_{n}},x_{\epsilon_{n}})\to (t_{\circ},x_{\circ})$}, and such that $(t_{\epsilon_{n}},x_{\epsilon_{n}})$ is a   maximum of $
W_{\epsilon_{n}}-\vp$ in the interior of $B_{2k}$, see e.g.~\cite[Lemma 6.1]{Bar94}. For $k> \epsilon_{n}^{\frac12}  (\|b_{1}\|_{\infty}+\|b_{2}\|_{\infty})$, the viscosity subsolution property of $ V_{\epsilon_{n}}$, applying  Proposition \ref{prop: HJB saut + continuite} to the test function $\bar{V}+ {\epsilon_{n}^{\frac{\beta}{2}}}\varphi$, implies that  
\begin{align*}
0\le & \partial_{t}(\bar V+\epsilon_{n}^{\frac{\beta}{2}}\vp)(t_{\epsilon_{n}},x_{\epsilon_{n}}) \\
&+  \frac1{\epsilon_{n}}\left(\int \left(\bar V+\epsilon_{n}^{\frac{\beta}{2}}\vp\right)(t_{\epsilon_{n}},x_{\epsilon_{n}}+b_{\bblue{\epsilon_{n}}}(x_{\epsilon_{n}},\bar a_{{n}},e))\nu(\de e)- \left(\bar V+\epsilon_{n}^{\frac{\beta}{2}}\vp\right)(t_{\epsilon_{n}},x_{\epsilon_{n}})+\epsilon_{n} r(x_{\epsilon_{n}},\bblue{\bar a_{{n}}}) \right) 
\end{align*}
for some $\bar a_{{n}}\in \Ab$. \bblue{Since $\vp\in C^{1,2}_{b}$, a second order Taylor expansion combined with Assumption \ref{asmp: b2 conds} implies that 
\begin{align*}
\lim_{n\to \infty} \epsilon_{n}^{\frac{\beta}{2}} \left[\partial_{t}\vp(t_{\epsilon_{n}},x_{\epsilon_{n}}) +  \frac1{\epsilon_{n}}\left(\int \vp(t_{\epsilon_{n}},x_{\epsilon_{n}}+b_{\bblue{\epsilon_{n}}}(x_{\epsilon_{n}},\bar a_{{n}},e))\nu(\de e)- \vp(t_{\epsilon_{n}},x_{\epsilon_{n}}) \right) 
\right]=0.
 \end{align*}
 Thus, if $\bar a$ is a limit point of $(\bar a_{n})_{n\ge 1}$, we deduce from \eqref{eq: def delta r eps}-\eqref{eq: borne delta r epsilon dans thm} and the above that 
 $$
 0\le \partial_{t}\bar V(t_{\circ},x_{\circ})+\mu(x_{\circ},\bar a)\partial_{x} \bar V(t_{\circ},x_{\circ}) +\frac12 \sigma^{2}(x_{\circ},\bar a) \partial^{2}_{xx}\bar V(t_{\circ},x_{\circ}) +r(x_{\circ},\bar a).
 $$
In view of Proposition \ref{prop: HJB bar V}, this shows that $\bar a_{{n}}$ converges to some element of $\bar a\in  \Ab_{0}(t_{\circ},x_{\circ})$ as $n$ goes to infinity,  after possibly  passing to a subsequence.} \bru{By  \eqref{eq: approx edp 2} and the above,}   
\[
0\le \partial_{t} \vp(t_{\epsilon_{n}},x_{\epsilon_{n}}) +   \frac1{\epsilon_{n}}\int\left(  \vp(t_{\epsilon_{n}},x_{\epsilon_{n}}+b_{\bblue{\epsilon_{n}}}(x_{\epsilon_{n}},\bar a_{{n}},e))- \vp(t_{\epsilon_{n}},x_{\epsilon_{n}})  +\epsilon_{n} \epsilon_{n}^{-\frac{\beta}{2}}\bblue{\delta r_{\epsilon_{n}}(x_{\epsilon_{n}},\bar a_{n})}\right)\nu(\de e).
\]
Sending $n\to \infty$ and using parts a.~and b.~of Assumption \ref{ass: first order expansion} together with Assumption \ref{asmp: b2 conds},  this leads to
\[
0\le \partial_{t} \vp(t_{\circ},x_{\circ}) +  \mu(x_{\circ},\bar a) \partial_{x}\vp(t_{\circ},x_{\circ})+\frac12 \sigma(x_{\circ},\bar a) \partial^{2}_{xx}\vp(t_{\circ},x_{\circ}) +r_{1}(x_{\circ},\bar a),
\]
so that the required subsolution property is proved on $ [0,T)\times \R$. The fact that $W^{*}(T,\cdot )\le 0$ follows from the last assertion of Theorem \ref{thm : ecart V et bar V}. 
\\
\bblue{To prove the supersolution property, it suffices to follow the same arguments but choose $\bar a_{n}\in \Ab_{0}(t_{\epsilon_{n}},x_{\epsilon_{n}})$. For a test function $\vp\in C^{1,2}_{b}$ for $W_{*}$ at $(t_{\circ},x_{\circ})\in [0,T)\x \R$, keeping the same notations as above, this lead to 
\begin{align*}
 0\ge & \partial_{t}(\bar V+\epsilon_{n}^{\frac{\beta}{2}}\vp)(t_{\epsilon_{n}},x_{\epsilon_{n}}) \\
&+   \frac1{\epsilon_{n}}\left\{\int \left(\bar V+\epsilon_{n}^{\frac{\beta}{2}}\vp\right)(t_{\epsilon_{n}},x_{\epsilon_{n}}+b_{\bblue{\epsilon_{n}}}(x_{\epsilon_{n}},\bar a_{{n}},e))\nu(\de e)- \left(\bar V+\epsilon_{n}^{\frac{\beta}{2}}\vp\right)(t_{\epsilon_{n}},x_{\epsilon_{n}})+\epsilon_{n} r(x_{\epsilon_{n}},\bblue{\bar a_{{n}}}) \right\} \\
=& \epsilon_{n}^{\frac{\beta}{2}}\left( \partial_{t} \vp(t_{\epsilon_{n}},x_{\epsilon_{n}}) +   \frac1{\epsilon_{n}}\int\left(  \vp(t_{\epsilon_{n}},x_{\epsilon_{n}}+b_{\bblue{\epsilon_{n}}}(x_{\epsilon_{n}},\bar a_{{n}},e))- \vp(t_{\epsilon_{n}},x_{\epsilon_{n}})  +\epsilon_{n} \epsilon_{n}^{-\frac{\beta}{2}}\bblue{\delta r_{\epsilon_{n}}(x_{\epsilon_{n}},\bar a_{n})}\right)\nu(\de e)\right)
\end{align*}
by Proposition \ref{prop: HJB bar V} and \eqref{eq: def delta r eps}.} 
\\
By comparison,   $W:= W^{*}=W_{*}$ is the unique bounded viscosity solution of \eqref{eq: PDE W i} and is therefore equal to $\delta \bar V^{(1)}$.

b. We now assume that $\delta \bar V^{(1)}$ is $C^{1,2}([0,T)\x \R)$ and that $\partial^{2}_{xx}\delta \bar V^{(1)}$ is $\delta \beta$-H\"older continuous in space\bru{, uniformly on $[0,T)\x \R$,} for some $\delta \beta>0$ such that 
\eqref{eq: vitesse delta eps} holds. Using \eqref{eq: vitesse delta eps}  and the same arguments as in the proof of Theorem \ref{thm : ecart V et bar V} lead to
\begin{align}\label{eq: vitesse delta r1 eps}
 \limsup_{\epsilon\downarrow 0} \epsilon^{-\frac{\delta \beta}{2}} \bblue{\norm{  \delta r^{(1)}_{\epsilon}}_{\infty}}<\infty,
\end{align}
in which 
\begin{align*}
 \delta r^{(1)}_{\epsilon} &:= \frac1\epsilon \bblue{\int\left(\delta \bar V^{(1)}(\cdot,\cdot+b_{\epsilon})-\delta \bar V^{(1)}\right) \nu(\de e)}
+ \epsilon^{-\frac{\beta}{2}}\delta r_{\epsilon} -  \bru{\mu}\partial_{x}\delta \bar V^{(1)}-\frac12 \bru{\sigma}^{2} \partial^{2}_{xx}\delta \bar V^{(1)} -r_{1}.
\end{align*}
Moreover, direct computations using the above and \eqref{eq: approx edp 2} show that $\bar V^{(1)}_{\epsilon}$ defined in \eqref{eq: def bar V i}  solves 
\begin{align*}
\partial_{t}\bar V^{(1)}_{\epsilon}+  \frac1\epsilon \int \left(\bar V^{(1)}_{\epsilon}(\cdot,\cdot+b_{\epsilon}(\cdot,\check a,e))-\bar V^{(1)}_{\epsilon}(t,x)-\epsilon \epsilon^{\frac{\beta}{2}}\bblue{\delta r^{(1)}_{\epsilon}(\cdot,\check a)}\right)\nu(\de e)+r(\cdot,\check a)=0
\end{align*}
on $[0,T)\x \R$, where $\check a$ is defined as in \eqref{eq: def a check}. Together with  \eqref{eq: vitesse delta r1 eps}, this implies that, for $\check \alpha^{t,x}$ defined as in \eqref{eq: def check alpha}, we have 
\begin{align*}
\frac1{\lambda_{\epsilon}} \E\left[\int_{t}^{T}      r(X^{t,x,\check \alpha^{t,x}}_{s-},\check \alpha^{t,x}_{s}) \de N_{s}\right]
 \ge  \bar V^{(1)}_{\epsilon}(t,x)-\epsilon^{\frac{\beta}{2}}O(\epsilon),
\end{align*}
in which $O$ is a continuous function with $O(0)=0$. On the other hand, it follows from Step a.~that $|V_{\epsilon}(t,x)-\bar V^{(1)}_{\epsilon}(t,x)|\le o(\epsilon^{\frac{\beta}{2}})$.
\end{proof}

\begin{Remark}\label{rem : si r1 pas def} If the limit in \eqref{eq :def delta r i} is not defined, one can still define its \bru{relaxed} limsup \bru{and  liminf} (recall that it is bounded). Let us denote them by $r_1^{*}$ and $r_{1*}$ respectively. Then, 
$W^{*}$ defined in the above proof is simply a viscosity sub-solution of \eqref{eq: PDE W i} with $r_1^{*}$ in place of $r_{1}$. Similarly, $W_{*}$ is a viscosity super-solution of the same equation but with $r_{1*}$ in place of $r_{1}$. This still provides asymptotic upper- and lower-bounds for $\epsilon^{-\frac{\beta}{2}}(V_{\epsilon}-\bar V)$.

\end{Remark}

\begin{Example}\label{example : first correction} To illustrate the above, we consider a toy model in which explicit solutions can be derived. Although it does not satisfy our general assumptions, e.g.~of boundedness and H\"older regularity in space, we shall see that a similar approach can still be applied. We consider the dynamics
$$
X^{t,x,\alpha}=x+ \int_{t}^{\cdot } X^{t,x,\alpha}_{s-} \int (\epsilon b_{1} (\alpha_{s},e)+\sqrt{\epsilon} b_{2}(\alpha_{s},e) )N(\de e,\de s),
$$
in which $b_{1}$ and $b_{2}$ are bounded and continuous with respect to their first argument, uniformly in the second one. 
For $\gamma \in (0,1]$, the value function is defined as 
$$
V_{\epsilon}(t,x)=\sup_{\alpha\in \Ac^{t}} \frac1{\lambda_{\epsilon}}\E\left[\int_{t}^{T} \int |X^{t,x,\alpha}_{s-}|^{\gamma} r(\alpha_{s}) \de N_{s}\right],
$$
for some continuous function $r$.  Then, one easily checks that $\bar V(t,x)= \bar  f(t)|x|^{\gamma}$ in which $\bar  f$ solves 
\begin{align*}
\partial_{t} \bar f  +   \sup_{\bar a\in \Ab} \left( \bar f\{\gamma \mu (\bar a)+\frac12 \gamma(\gamma-1) \sigma^{2}(\bar a)\}  +r(\bar a) \right) =0, \mbox{ on } [0,T)\x \R,
\end{align*}
with $\bar f(T)=0$. Because $|x|^{\gamma}$ factorizes, the H\"older constant of $\partial^{2}_{xx}\bar V$ can be considered around $x=1$. Since the third-order space derivative of $\bar V$ is bounded in a neighbourhood of $1$,  Theorem \ref{thm : ecart V et bar V} applies with  $\beta=1$. The convergence rate is therefore of order $\epsilon^{\frac12}$. Moreover, by direct computations, the first order correction term is of the form  $\delta \bar V^{(1)}(t,x)=\delta \bar f(t)|x|^{\gamma}$ where $\delta \bar f\not\equiv 0$ solves 
$$
\partial_{t}\delta\bar f + \sup_{\bar a\in \Ab_0}\left( \delta \bar f\{\gamma \mu (\bar a)+\frac12 \gamma(\gamma-1) \sigma^{2}(\bar a)\}  +r_{1}(\cdot,\bar a) \right)=0 
$$
with  $\delta \bar f(T)=0$, in which 
$$
(t,\bar a)\in [0,T]\x \Ab \mapsto r_{1}(t,\bar a):= \gamma(\gamma-1)\ell\left(  \int (b_1b_{2})(\bar a,e)\nu(\de e)\right) \bar f(t)
$$
for some  (explicit) continuous map  $\ell$ with linear growth. In particular, this shows that the convergence rate in $\epsilon^{\frac12}$ proved in Theorem \ref{thm : ecart V et bar V} is sharp.
\end{Example}

\subsection{Higher order expansions}\label{subsec: higher order expansions}

To conclude this section, note that higher order expansions can be obtained, upon existence of an associated systems of parabolic equations. Namely, let   us assume the following.

\begin{Assumption}\label{ass: existence systeme} There exists $(\delta \beta_{i})_{i=0,\cdots,i_{\circ}}\subset (0,1]^{i_{\circ}}$ together with  $C^{1,2}([0,T)\x \R)\cap  C^{0}([0,T]\x \R)$ functions $(\delta \bar V^{(i)})_{i=0,\cdots,i_{\circ}}$ such that, for $i=0,\cdots,i_{\circ}$, $\partial^{2}_{xx}\delta \bar V^{(i)}$ is $\delta \beta_{i}$-H\"older in space\bru{, uniformly on $[0,T)\x \R$,} and  $\delta \bar V^{(i)}$ solves
\begin{align*}
&\partial_{t}\delta \bar V^{(i)} +     \mu(\cdot,\check \ar_{\epsilon}) \partial_{x}\delta \bar V^{(i)}+\frac12 \sigma(\cdot,{\check \ar_{\epsilon}})^{2} \partial^{2}_{xx}\delta \bar V^{(i)} +r_{i}(\cdot,{\check \ar_{\epsilon}})  =0,\; \mbox{ on } [0,T)\times \R,  \\
&\delta \bar V^{(i)}(T,\cdot)=0\; \mbox{ on } \R,
\end{align*}
in which  $\check \ar_{\epsilon}$ is a Borel measurable map such that 
$$
\check \ar_{\epsilon}\in \underset{\bar a \in {\Ab}}{\rm argmax}  \left( \mu(\cdot,\bar a) \partial_{x}  \bar V^{(i_{\circ})}_{{\epsilon}}+\frac12 \sigma(\cdot,\bar a)^{2} \partial^{2}_{xx}  \bar V^{(i_{\circ})}_{{\epsilon}} +{r}(\cdot,\bar a) \right),
$$
with 
$$
\bar V^{(i_{\circ})}_{\epsilon}{:= }\delta \bar V^{(0)}+\sum_{j=1}^{i_{\circ} } \epsilon^{\frac{\beta_{{j}-1}}{{2}}} \delta \bar V^{({j})},\;\beta_{i}{:=}\sum_{j=0}^{i} \delta \beta_{j}\mbox{ for  } i\le i_{\circ},
$$
and, using the conventions $\delta \beta_{-1}:=0$ and $\delta r^{(-1)}_{\epsilon}:= r$, for $0\le i\le i_\circ$,
\begin{align}
 \delta r^{(i)}_{\epsilon}&:= \frac1\epsilon\bblue{ \int\left( \delta \bar V^{(i)}(\cdot,\cdot+b_{\epsilon})-\delta \bar V^{(i)} \right)\nu(\de e)}
+ \epsilon^{-\frac{\delta \beta_{i-1}}{2}}\delta r^{(i-1)}_{\epsilon} -  \bru{\mu}{\partial_{x}}\delta  \bar V^{(i)}-\frac12 \bru{\sigma}^{2} \partial^{2}_{xx}\delta \bar V^{(i)} -r_{i}\nonumber\\ 
r_{i} &:=r\1_{\{i=0\}}+\1_{\{i>0\}}\lim_{\epsilon \to 0}  \bblue{   \epsilon^{-\frac{\delta \beta_{i-1}}{2}} \delta r^{(i-1)}_{\epsilon}}\mbox{ for  } i\le i_{\circ}.\label{eq :def delta r i bis}
\end{align}
The limits in {\eqref{eq :def delta r i bis}} are well-defined on $[0,T\bru{)}\x \R$, and   
\begin{align}\label{eq: vitesse delta eps i circ}
 \limsup_{\epsilon\downarrow 0} \epsilon^{-\frac{\delta \beta_{i_{\circ}}}{2}}\bblue{\norm{  \epsilon^{-\frac{  \delta \beta_{i_{\circ}-1}}{2}}\delta r^{(i_{\circ}-1)}_{\epsilon}-r_{i_{\circ}}}_{\infty}}<\infty. 
\end{align} 
\end{Assumption}

\begin{Proposition} Let Assumption \ref{ass: existence systeme} hold. Then, for all $(t,x)\in [0,T]\x \R$, 
$$
\limsup_{\epsilon\downarrow 0} \epsilon^{-{\frac{\beta_{i_{\circ}}}{2}}} \abs{V_{{\epsilon}}-\bar V^{(i_{\circ})}_{\epsilon}}(t,x)<\infty.
$$
Moreover,  the control defined by
\begin{align*}
\check \alpha^{t,x}_{s}= \check \ar_{\epsilon}(s,X^{t,x,\check \alpha^{t,x}}_{s-}), \;s\in [t,T\bru{)},
\end{align*}
{satisfies} 
\begin{align*} 
\frac1{\lambda_{\epsilon}} \E\left[\int_{t}^{T}      r(X^{t,x,\check \alpha^{t,x}}_{s-},\check \alpha^{t,x}_{s}) \de N_{s}\right]
 \ge  V_{\epsilon}(t,x)-C\epsilon^{\frac{\beta_{i_{\circ}}}{2}}, \mbox{ { for all $\epsilon>0$,} }
\end{align*}
for some constant $C>0$.
\end{Proposition}
%

\begin{proof} With the above construction 
\begin{align*}
\partial_{t}\bar V^{(i_{\circ})}_{\epsilon}+  \frac1\epsilon \int \left(\bar V^{(i_{\circ})}_{\epsilon}(\cdot,\cdot+b_{\epsilon}(\cdot,\check \ar_{ {\epsilon}},e))-\bar V^{(i_{\circ})}_{\epsilon}(t,x)-\epsilon \epsilon^{\frac{\beta_{i_{\circ}-1}}{2}}\delta r^{(i_{\circ})}_{\epsilon}\bblue{(\cdot,\check \ar_{ {\epsilon}})}\right)\nu(\de e)+r(\cdot,\check \ar_{\epsilon})=0
\end{align*}
on $[0,T)\x \R$, while 
\begin{align*}
\partial_{t}\bar V^{(i_{\circ})}_{\epsilon}+  \frac1\epsilon \int \left(\bar V^{(i_{\circ})}_{\epsilon}(\cdot,\cdot+b_{\epsilon}(\cdot,  \ar,e))-\bar V^{(i_{\circ})}_{\epsilon}(t,x)-\epsilon \epsilon^{\frac{\beta_{i_{\circ}-1}}{2}}\delta r^{(i_{\circ})}_{\epsilon}\bblue{(\cdot,  \ar)}\right)\nu(\de e)+r(\cdot,  \ar)\le 0
\end{align*}
on $[0,T)\x \R$ for all $\ar:\bblue{\R\to \Ab}$.
 By \eqref{eq: vitesse delta eps i circ} and the same arguments as in the proof of Theorem \ref{thm : ecart V et bar V},
\begin{align*}
 \limsup_{\epsilon\downarrow 0} \epsilon^{-\frac{\delta \beta_{i_{\circ}}}{2}} \norm{\bblue{ \delta r^{(i_{\circ})}_{\epsilon}}}_{\infty}<\infty,
\end{align*}
so that the required result follows by verification. 
\end{proof}

\section{Application to an auction problem}\label{sec: exemple auction}

Repeated online auction bidding are typical problems in which the real value of the parameters $b,r$ and $\nu$ are unknown\bru{,} and on which reinforcement learning  techniques are applied. The later requires to estimate, very quickly, the optimal control for different sets of parameters. Being modeled as a discrete time problem, with fixed auction times, or more realistically  in the form of a pure-\bru{jump} problem as in Section \ref{sec : pure jump problem}, see also \cite{fernandez-tapia2017Optimal}, we face in any case the fact that auctions are issued almost continuously which corresponds to a very {small time step} in the discrete-time version or to a very large intensity in the pure-jump modelling. The numerical cost of a precise estimation of the optimal control is too important to combine it with a reinforcement learning approach.

\subsection{Model and description of the optimal policy}

We consider here a simple auction problem motivated by online advertising systems. A single ad-campaign is provided several opportunities to buy ad-space to display its ad over the course of the day. These ad spaces arrive at random, according to the point process $N$, since they are dependent on users from specific targeted audiences loading a website. In real-world display advertising, the kind encountered on the sides of web-pages, these opportunities take the form of an auction between several bidders and an ad-exchange platform. 

The format of the auction used is critical to the strategic behaviour of bidders and the revenue of the seller. There is a large amount of literature in auction theory on the subject, see \textit{e.g.} \cite{myerson1981optimal,ostrovsky2011reserve, paesleme2016field}, and real-world auctions can take very complex formats. For simplicity, we consider an auctioneer which has implemented a lazy second price auction \cite{vickrey1961counterspeculation, paesleme2016field} with individualised reserve price. In this format, our bidding agent wins the ad-slot if it submits a bid above its (henceforth the) reserve price and the competition, and if it wins it pays the maximum between the reserve price and the competition. For a given reserve price $x$, a bid $a\in(0,+\infty)$ and a random competition bid $B\ge 0$ following a smooth probability distribution $F_B$, the expected payoff $r(x,a)$ for an auction is thus expressible through a simple integration by parts as
\begin{align}
r(x,a) = \EE[(v-x\vee B)\1_{a\ge x\vee B}]=\1_{a\ge x}\left((v-a)F_B(a)+\int_x^aF_B(b)\de b \right)\,,\label{eq:exemple reward} 
\end{align}
in which $v$ is the value of the ad-slot for the bidder.  \bblue{Note that $r$ is not continuous  as it is assumed in the preceding sections. In practice, one can replace it by a smooth approximation. In the following, we shall construct a numerical scheme directly on $r$, without smoothing. It turns out that convergence still seems to be observed at the rate $\epsilon^{\frac12}$. Intuitively,  this is due to  the fact that the maximum values obtained in \eqref{eq: PDE HJB Jump} and \eqref{eq: PDE bar V} are the same 
for $r$ defined with $\1_{a\ge x}$ and $\1_{a>x}$ whenever $x<\sup \Ab$, which is true  at each time with probability one for the controlled processes defined below.}

As the right hand side of \eqref{eq:exemple reward} highlights, reserve prices are a mechanism put in place by sellers to compensate for lack of competition, which would drive down the price and their profits. It is well established that a reserve price is not as profitable as increasing the number of participants by one \cite{Bulow1996Auctions}. Consequently, when there are many bidders a control will have little effect on the system. To clearly demonstrate the use of controlling the reserve price, we study a strongly asymmetric setting, where the agent has a value $v=0.5$ much higher than the competition, which we take uniform on $(0,0.3)$. In this setting, it is directly competing against the seller for \lc{its} extra value above the average competition. For the purpose of this example, we do not want to go to the limit of this asymmetry, the posted price auction where there is no competition, as it could lead the control problem to degeneracy, such as negative prices and difficult boundary conditions.

There is a large literature on revenue maximisation algorithms in online auctions, or how to set the reserve price to maximise revenue, such as \cite{croissant2020real,cesa2014regret,blum2004online,bubeck2019multi}.
For the sake of simplicity, in this example, we will model the dynamics of the reserve price using a simple mean reverting process:
\[b_{1}(x,a,e) = \kappa a+ (1-\kappa)r_0 -x \mbox{ and } b_{2}(x,a,e)=e \mbox{ with } \nu\sim\texttt{Unif}(-0.1,0.1), \] 
with $\kappa\in (0,1)$ and $r_{0}\in\RR_+$. The reserve price process $X^{t,x,\alpha}$ is then defined from these coefficients as in \eqref{def : X}, with $b:=b_\epsilon=\epsilon b_1 +\sqrt\epsilon b_2$ and $\lambda:=\lambda_\epsilon=\epsilon^{-1}$. This corresponds to setting a minimum reserve price $(1-\kappa)r_0$, and tracking the agent's bid with aggressiveness measured by $\kappa$. Setting $r_0=0.15$ as the monopoly price of the competition guarantees the seller a better revenue against the competition, while $\kappa a$ allows him to pursue the agent's extra value. We set $\kappa=1/2$, for a balance between prudence and aggression.

The control problem consists in maximising the static auction revenue, while considering the impact bids have on the system. In the static auction format, we can identify three domains the reserve price can be in: ``non-competitive'', ``competitive'', and ``unprofitable''. When the reserve price is below the competition's average\footnote{Recall that the competition here models the distribution of the maximum bid of all other participants, so this is the average of the maximum of other participants' bids.} there is essentially no prejudice to the agent, since the reserve price barely affects his profits. Therefore there is no need to compete with and control the reserve price. On the other hand, when the reserve price is in the range between $0.3$ and $v=0.5$, the reserve price becomes the dominant term in $r$ and the agent has to compete with the seller over the value margin it has relative to other buyers. Finally, if the reserve price is above $v$, there is no possible profit so no reason to take part in the auction by bidding $a>0$. \bru{For the same reason, we take $ {\Ab:=}[0,\bru{0.5]}$.}

When the reserve price is dynamic, a good control seeks to maximise profit while pushing the reserve price to the non-competitive domain. \lc{One} can see this in effect on figure \ref{fig:trajectories}. In the non-competitive regime (left), starting at a reserve price of $0.15$, this policy recovers $85\%$ of the best possible income of the static setting, where the reserve price is $0$ for all $t$, and the average price is $0.5-\EE[B]=0.35$. In the competitive regime (centre), the policy bids just above the reserve price to apply a downwards pressure until it reaches the non-competitive domain again. Finally, in the unprofitable regime (right), the agent boycotts the auction by bidding $0$, bringing down the price. Notice how when the agents stops boycotting there is an inflection point in the downwards trend of the price, schematically represented by the dotted line. 
\begin{figure}[ht]
    \centering
    \includegraphics[width=\textwidth]{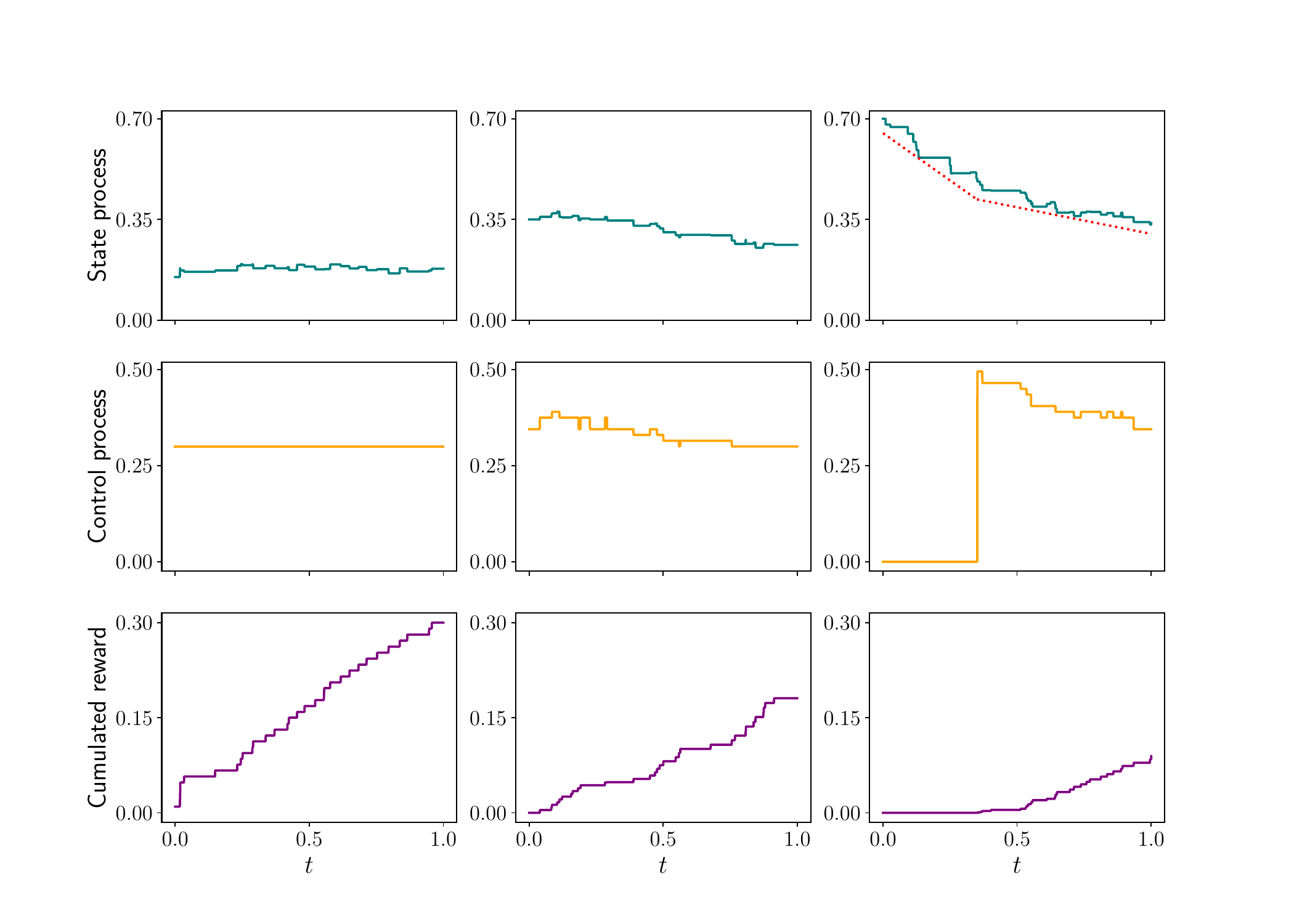}
    \caption{Selected sample realisations of the system for $\epsilon=10^{-1.5}$, starting from ${x=}0.15$ (left), ${x=}\bru{0.35}$ (centre), and ${x=}0.7$ (right).} \label{fig:trajectories} 
\end{figure}

\subsection{Numerical implementation}\label{subsec: example: pure jump}

Adapting \eqref{eq: PDE HJB Jump} and \eqref{eq: PDE bar V}, we normalise the horizon to $1$, and allow the reserve price to \bru{vary in $\RR$.} This allows us to easily set boundary conditions for the equation. When an auction happens with a negative price $x$, the price is set by the competition, which will be a.s. positive. \lc{Thus} as $x\to-\infty$, the reserve price becomes irrelevant and the value converges to the value of a single auction \bru{without reserve price.} Conversely, for $\epsilon<1$, as $x\to+\infty$, the probability of $X_t^{0,x,\alpha}$ descending below $v$ by time $T$ and generating any revenue decreases due to the noise. \bblue{Hence, a} \bru{Neuman} boundary condition \bblue{set to 0} is appropriate at $[0,1)\times\{-\infty,+\infty\}$. In numerical resolution, we will use \bru{Neuman boundary conditions equal to 0 on $[0,1)\times\bblue{\{-1,3\}}$}. Given this domain for the reserve price, we can set the controls on an even mesh in $\bblue{\Ab=}[0,\bru{0.5]}$, of fineness $0.01$. 

We solve both problems numerically with an explicit finite difference solver, and for simplicity a Riemann sum using the same mesh for the numerical integration part. This formulation is equivalent to a Markov Chain control problem.  Let $M_t=\{k\Delta_t\,;\, k=0,\dots,\lfloor1/\Delta_t\rfloor\}$, $M_x=\bblue{\{-1 +k\Delta_x\,;\, k=0,\dots,\lfloor4/\Delta_x\rfloor\}}$ be the  \bblue{time and space} meshes, with \lc{finenesses}, $\Delta_x=\epsilon^{3/2}/2$, $\Delta_t=\Delta_x^{2/3}$. Denote $V_n(x_i)$ the output of the solver at time $t_n\in \mc{M}_t$ and position $x_i\in\mc{M}_x$. For the pure jump problem, we explicitly compute:
\[
V_n^\epsilon(x_i)=V_{n+1}^\epsilon(x_i) + \frac{\Delta_t}{\epsilon}\sup_{a\in\mathbb{A}_n} \left(\sum_{x_j\in \mc{M}_x} V_{n+1}^\epsilon(x_j)f^{\nu,\epsilon}_{x_i,a}(x_j)\Delta_x - V_{n+1}^\epsilon(x_i) + r(x_i,a)\right)\,
\]
where $f^{\nu,\epsilon}_{x,a}$ is the transition kernel induced by $b_1(x,a,\cdot)$, $b_2(x,a,\cdot)$, and $\nu$. For the diffusion, we consider meshes $\mc{M}_t=\{kd_t\,;\, k=0,\dots,\lfloor1/d_t\rfloor\}$, $\mc{M}_x=\bblue{\{-1 +kd_x\,;\, k=0,\dots,\lfloor4/d_x\rfloor\}}$, with $d_x=10^{-2}$, $d_t=d_x^2$ and solve recursively
\[ \bar V_{n-1}(x_i) = \bar V_n(x_i) + d_t \sup_{a \in \mathbb{A}_n}\left((\kappa a +(1-\kappa) r_0 -x)\delta^u_x\bar V_n(x_i) + \frac{1}{2}\sigma^2\delta_{xx}\bar V_n({x_{i}}) + r(x_i,a)\right)\]
where $\delta^u_x$ and $\delta_{xx}$ are the uplift first order and centred second order finite differences on $\mc{M}_x$ respectively. We took $\mathbb{A}_n=\{10^{-2}k; k=0,\dots,\bblue{50}\}$   in both cases.

To give some insight into the complexity trade-off, see that\bru{,} when $\epsilon$ is large, there are relatively few jumps so the time iteration won't require many steps to get an accurate solution. This scaling is indicated by the $\Delta_t/\epsilon$ term. At the same time, the jumps are large so even a coarse mesh in $x$ will be sufficient for the numerical integration to approach the integral. Unfortunately as $\epsilon\to0$, one must refine both the time mesh, linearly with $1/\epsilon$, and the integration mesh which is paid quadratically due to the non-local nature of the equation. In practice, this makes computations grow at a super-cubic rate with $\epsilon$, which becomes prohibitively expensive quickly. In our example problem, the noise is supported on a bounded interval of size $\sqrt{\epsilon}$, \lc{and one thus saves} some computation time, but \lc{F}igure \ref{fig:comp cost} shows the computation cost (pictured with dots) still grows super-quadratically and overcomes the cost of our accurate diffusion mesh  (solid horizontal line) even for large $\epsilon$. Even though we computed the diffusive limit to a very high precision, and with an explicit scheme, for $\epsilon$ of order of $10^{-3}$ the CPU time spent \lc{on resolution} is already $6$ times higher in the pure-jump problem. Note that, in the pure-jump case, if the control were to intervene in a non-linear way we might need to also refine the control mesh with $\epsilon$, further increasing the computational burden. 

\begin{figure}
\centering
\includegraphics[width=.75\textwidth]{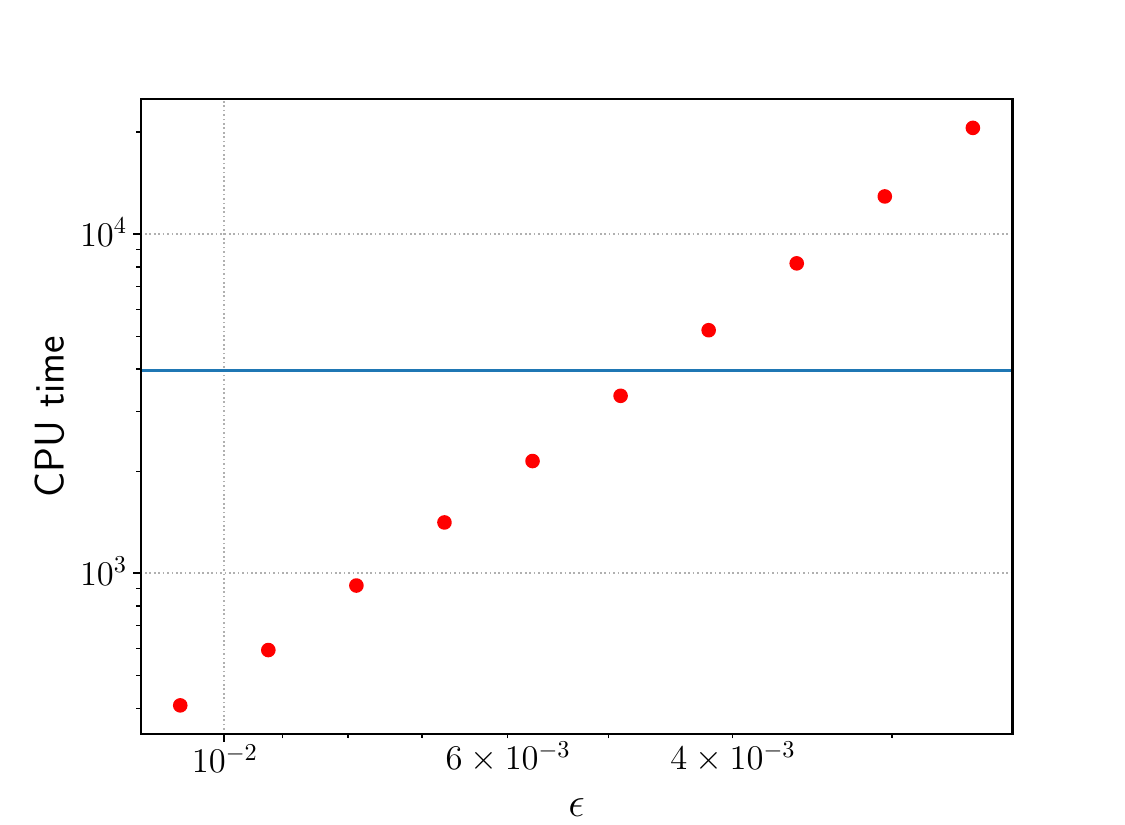}
\caption{Numerical cost for $V_{\epsilon}$ (log scales).}
\label{fig:comp cost}
\end{figure}


Beyond gains in computation, Figure \ref{fig:value functions} verifies that Proposition \ref{thm : ecart V et bar V} holds with meaningful constants in finite time on this problem. Figure \ref{fig:value functions} shows that the error is very low even for large values of $\epsilon$, and decreases at the correct rate of $\epsilon^{\bblue{1/2}}$. Likewise, Figure \ref{fig:policies} shows the rate of Proposition \ref{prop: construction control eps opti} also holds even for large $\epsilon$. 

\begin{figure}[ht]
\centering
\begin{minipage}[t]{.45\textwidth}
    \centering
    \includegraphics[width=\textwidth]{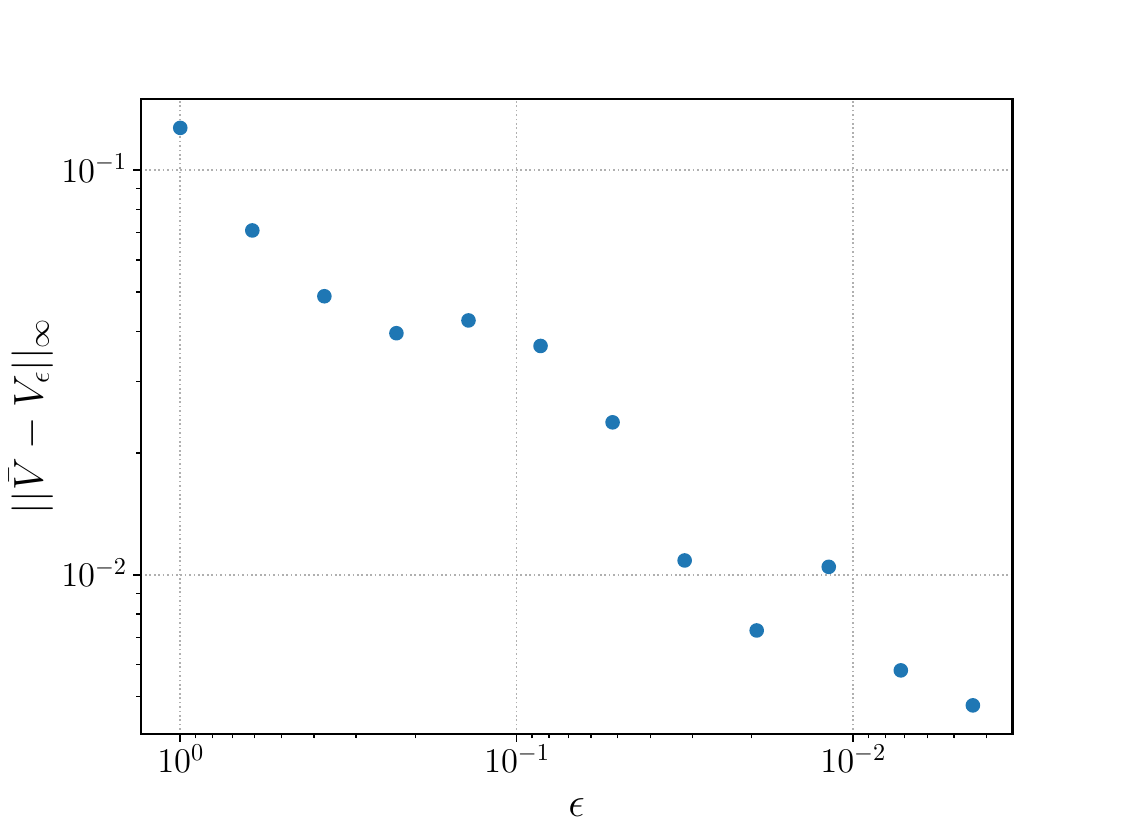}
    \caption{Limit value function error relative to $V_\epsilon$, at $t=0$ (log scales).}\label{fig:value functions}
\end{minipage}
~~
\begin{minipage}[t]{.45\textwidth}
    \centering
    \includegraphics[width=\textwidth]{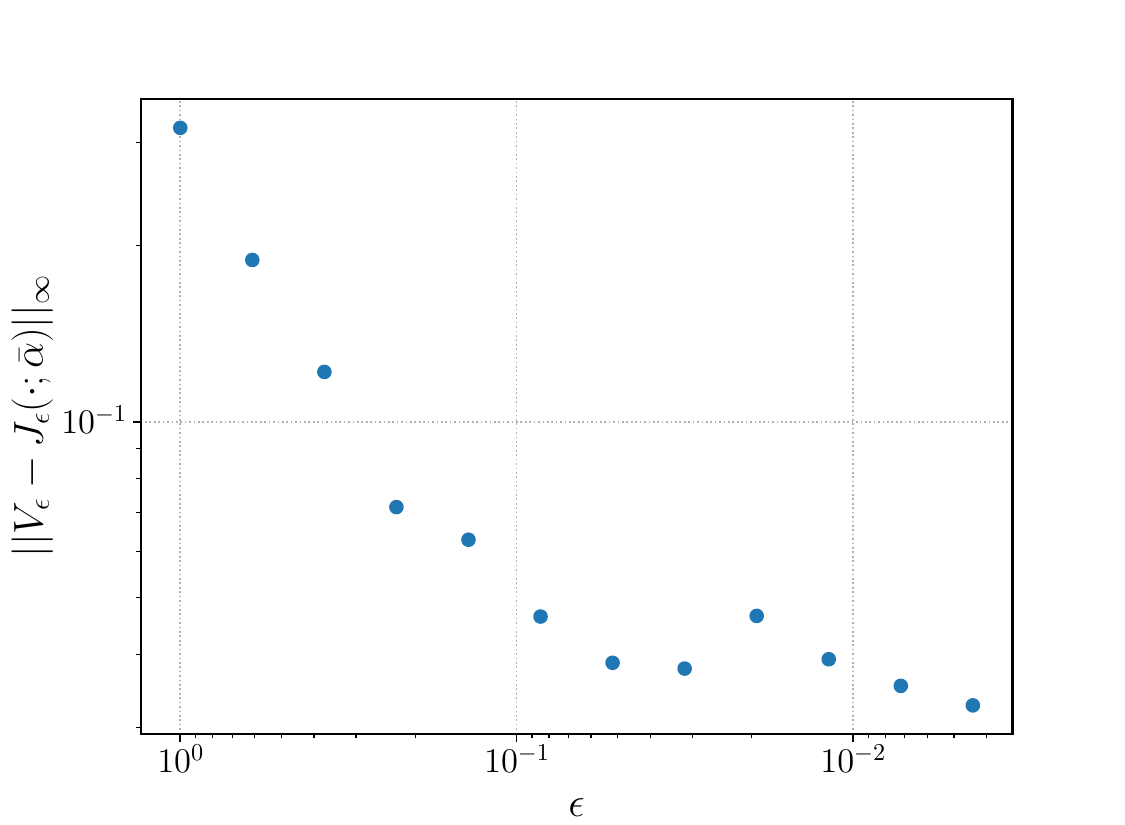}
    \caption{Limit policy error relative to $V_\epsilon$, at $t=0$ (log scales).}\label{fig:policies}
\end{minipage}
\end{figure}


 \section{Remark on the diffusive limit of discrete time problems}\label{subsec: discrete time problem}

Instead of considering the diffusive limit of a continuous time pure-jump problem, one could similarly consider a sequence of pure discrete time problems with actions at time $t^{n}_{i}:=iT/n$, $i\le n$:
\begin{align*}
V_{n}(t,x)&:=\sup_{\alpha \in \Ac}\frac{T}{n}\E\left[ \sum_{i\bru{=1}}^{n}\1_{\{t^{n}_{i}\ge t\}}   r(X^{t,x,\alpha}_{t^{n}_{i}-},\alpha_{t^{n}_{i}})\right],
\end{align*}
with $X^{t,x,\alpha}$ defined by
\begin{align*}
X^{t,x,\alpha}=\bru{x+ }  \sum_{i\bru{=1}}^{n}\1_{\{t^{n}_{i}\in (t,\cdot]\}}   b(X^{t,x,\alpha}_{t^{n}_{i}-},\alpha_{t^{n}_{i}},\xi_{i}^{n} )
\end{align*}
and in which $(\xi^{n}_{i})_{i\ge 1}$ is i.i.d.~following the distribution $\nu$ and $\Ac$ is the collection of $\Ab$-valued processes that are predictable with respect to the $\P$-augmented filtration generated by 
$
  \sum_{i\bru{=1}}^{n}\1_{\{t^{n}_{i}\in [0,\cdot]\}}    \xi_{i}^{n} .
$

Upon taking $b$ of the form 
$$
b_{n}=\frac{T}{n}b_{1}+\sqrt{\frac{T}{n}}b_{2},\;\mbox{ with } \E[b_{2}(\cdot,\xi^{n}_{1})]=0, 
$$
one would obtain the same diffusive limit as in Section \ref{subsec : conv speed} when letting $n\to \infty$. Namely, the same arguments as in \cite[Section 3]{fleming1989existence} combined with Proposition \ref{prop: HJB bar V} and the fact that comparison holds for \eqref{eq: PDE bar V} imply that 
$\lim_{n\to \infty} V_{n}$ is well-defined and is equal to $\bar V$. 

One can also check that the convergence holds at a speed $n^{-\frac{\beta}{2}}$. Let us sketch the proof.  First, the same arguments as in the proof of Theorem \ref{thm : ecart V et bar V}  imply that 
\begin{align*}
\delta r_{n}:=
\frac{n}{T}\bblue{\E\left[\bar V(\cdot,\cdot+b_{n}(\cdot,\xi_{i}^{n} ))-\bar V\right]}- \bblue{\mu} \partial_{x} \bar V-\frac12 \bblue{\sigma^{2}}\partial_{xx} \bar V
\end{align*}
satisfies
\begin{align}\label{eq: borne delta r T/n} 
\norm{\delta r_{n}}_{\infty}\le Cn^{-\frac{\beta}{2}}
\end{align}
for some $C>0$ independent on $n$. Thus, by Proposition \ref{prop: HJB bar V}
\begin{align*}
0&=\partial_{t} \bar V(t,x)\frac{T}{n}+  \sup_{a\in \Ab}\bblue{ \E \left[ \bar V(t,x+b_{n}(x,  a,\xi^{n}_{1}))-\bar V(t,x)+\frac{T}{n}(r(x,  a)-{\delta r_{n}(x, a)})\right]}
 \end{align*}
so that 
\begin{align*}
\bar V(t^{n}_{i},x)=& \sup_{a\in \Ab}\E\left[\int_{t^{n}_{i}}^{t^{n}_{i+1}} \partial_{t} \bar V(t^{n}_{i},x) \de s + \bar V(t^{n}_{i},x+b_{n}(x,  a,\xi^{n}_{i+1}))+\frac{T}{n}(r(x,  a)-\bblue{\delta r_{n}(x,  a)}) \right]\\
&=  \sup_{a\in \Ab} \left(\E\left[ \bar V(t^{n}_{i+1},x+b_{n}(x,  a,\xi^{n}_{i+1}))+\frac{T}{n} r(x,  a) \right]\right.\\
&~~~~~~~~~~+ \left.\E\left[\int_{t^{n}_{i}}^{t^{n}_{i+1}} (\partial_{t} \bar V(t^{n}_{i},x)-\partial_{t} \bar V(s,x+b_{n}(x,  a,\xi^{n}_{i+1})) -\bblue{\delta r_{n}(x,  a)} \de s \right]\right).
 \end{align*}
We then use \eqref{eq: holder vt}  and \eqref{eq: borne delta r T/n} to obtain that 
\begin{align*}
\bar V(t^{n}_{i},x)& = \sup_{a\in \Ab} \E\left[ \bar V(t^{n}_{i+1},x+b_{n}(x,  a,\xi^{n}_{i+1}))+\frac{T}{n} r(x,  a) +\int_{t^{n}_{i}}^{t^{n}_{i+1}} \varpi_{n}(s,x,a) \de s \right]
 \end{align*}
in which $|\varpi_{n}|\le Cn^{-\frac{\beta}{2}}$, for some $C>0$ independent of $n$. It follows that 
$$
\bar V(t^{n}_{i},x)=\sup_{\alpha \in \Ac}\E\left[\bru{\frac{T}{n} \sum_{j=i}^{n}  r(X^{t,x,\alpha}_{t^{n}_{j}-},\alpha_{t^{n}_{i}}) +\int_{t^{n}_{i}}^{T} \varpi_{n}(s,{X^{t,x,\alpha}_{s}},{\alpha_{s}}) \de s } \right],
$$
which provides the expected result \bru{since $\partial_{t}\bar V$ is bounded}.

Likewise, the Markovian control defined through \eqref{eq: def bar a theta} can be shown to be $n^{-\frac{\beta}{2}}$-optimal for $V_{n}$, see the proof of Proposition \ref{prop: construction control eps opti}.

\section{\bru{Conclusion}}

We studied the diffusion limit of a pure-jump control problem as the jump intensity goes to infinity, upon assuming a correct scaling of the coefficients.  Under appropriate conditions, we showed that the second order derivative of the value function associated to the limiting diffusing problem is H\"older continuous and that its H\"older exponent drives the convergence rate. Convergence can even be improved by using a first (or even higher) order correction scheme. This approach is particularly efficient for the numerical approximation of the optimal control associated to a pure jump process with large intensity, as it is the case in auctions associated to online advertising systems.

\bibliographystyle{plain}
\def\cprime{$'$} \def\cprime{$'$}

\end{document}